\renewcommand{\phi}{\varphi}
\newcommand{\QQ}{\mathbf{Q}}
\newcommand{\ZZ}{\mathbf{Z}}
\newcommand{\RR}{\mathbf{R}}
\newcommand{\CC}{\mathbf{C}}
\newcommand{\ds}{\displaystyle}
\newcommand{\N}{\mc{N}}
\newcommand{\lr}{\langle\cdot,\!\cdot\rangle}
\newcommand{\ol}[1]{\overline{#1}}
\newcommand{\mf}[1]{\mathfrak{#1}}
\newcommand{\mc}[1]{\mathcal{#1}}
\newcommand{\wt}[1]{\widetilde{#1}}
\newcommand{\gl}{\mathrm{GL}}
\newcommand{\GO}{\mathrm{GO}}
\renewcommand{\mod}[1]{\text{\! }(\operatorname{mod}\text{\! }#1)}
\renewcommand{\varepsilon}{\epsilon}
\DeclareMathOperator{\tr}{tr}
\DeclareMathOperator{\SL}{SL}
\newcommand{\Hom}{\mathrm{Hom}}
\renewcommand{\Re}{\mathrm{Re}}
\renewcommand{\Im}{\mathrm{Im}}
\newcommand{\vect}[1]{\begin{pmatrix}#1\end{pmatrix}}
\newtheorem{theorem}{Theorem}[section]
\newtheorem{theoremA}{Theorem}
\newtheorem{proposition}[theorem]{Proposition}
\newtheorem{lemma}[theorem]{Lemma}
\newtheorem{remark}[theorem]{Remark}
\newcommand{\conorm}[7]{!{(-100,0)}*{p}}
\newcommand{\Rcom}[1]{}
\begin{document}

\title[The shapes of pure cubic fields]{T\MakeLowercase{he shapes of pure cubic fields}}
\subjclass[2010]{11R16, 11R45, 11E12}
\keywords{Pure cubic fields, lattices, equidistribution, carefree couples}

\author{R\MakeLowercase{obert} H\MakeLowercase{arron}}
\address{
Department of Mathematics\\
Keller Hall\\
University of Hawai`i at M\={a}noa\\
Honolulu, HI 96822\\
USA
}
\email{rharron@math.hawaii.edu}
\date{\today}

\begin{abstract}
We determine the shapes of pure cubic fields and show that they fall into two families based on whether the field is wildly or tamely ramified (of Type I or Type II in the sense of Dedekind). We show that the shapes of Type I fields are rectangular and that they are equidistributed, in a regularized sense, when ordered by discriminant, in the one-dimensional space of all rectangular lattices. We do the same for Type II fields, which are however no longer rectangular. We obtain as a corollary of the determination of these shapes that the shape of a pure cubic field is a complete invariant determining the field within the family of all cubic fields.
\end{abstract}

\maketitle

%++++++++++++++++++++++++++++++++++++++++++++++++++++++++++++++
%++++++++++++++++++++++++++++++++++++++++++++++++++++++++++++++
 
\tableofcontents
 
\section{Introduction}
The shape of a number field is an invariant coming from the geometry of numbers that can be seen as a refinement of the discriminant, or as a positive definite relative of the trace-zero form. Specifically, a degree $n$ number field $K$ can be embedded into its Minkowski space as $j_\RR:K\hookrightarrow K\otimes_\QQ\RR\cong\RR^n$ yielding a rank $n$ lattice $j_\RR(\mc{O}_K)\subseteq\RR^n$, where $\mc{O}_K$ denotes the ring of integers of $K$. The \emph{shape} of $K$ is defined to be the equivalence class of the rank $n-1$ lattice given by the image $\mc{O}_K^\perp$ of $j_\RR(\mc{O}_K)$ in $\RR^n/\RR j_\RR(1)$ up to scaling, rotations, and reflections. Explicitly, one obtains a quadratic form on $K$ via
\begin{equation}\label{eqn:quadform}
	\alpha\longmapsto\!\!\!\sum_{\sigma\in\Hom(K,\CC)}|\sigma(\alpha)|^2
\end{equation}
and the shape can be defined as the $\gl_{n-1}(\ZZ)$-equivalence class, up to scaling by positive real numbers, of the restriction of this form to the image of $\mc{O}_K$ under the projection map $\alpha\mapsto\alpha-\tr_{K/\QQ}(\alpha)/n$.
The shape of $K$ may equivalently be considered as an element of the double-coset space $\gl_{n-1}(\ZZ)\backslash\gl_{n-1}(\RR)/\GO_{n-1}(\RR)$, where $\GO$ denotes the group of orthogonal similitudes. This space, which we denote $\mc{L}_{n-1}$ and call the \emph{space of rank $(n-1)$ lattices}, carries a natural $\gl_{n-1}(\RR)$-invariant measure.

Little is known about the shapes of number fields. Their study was first taken up in the PhD thesis of David Terr \cite{Terr}, a student of Hendrik Lenstra's. In it, Terr shows that the shapes of both real and complex cubic fields are equidistributed in $\mc{L}_2$ when the fields are ordered by the absolute value of their discriminants. This result has been generalized to $S_4$-quartic fields and to quintic fields by Manjul Bhargava and Piper Harron in \cite{Manjul-Piper, PiperThesis} (where the cubic case is treated, as well). Aside from this, Bhargava and Ari Shnidman \cite{Manjul-Ari} have studied which real cubic fields have a given shape, and in particular which shapes are possible for fields of given quadratic resolvent. Finally, Guillermo Mantilla-Soler and Marina Monsurr\`{o} \cite{M-SM} have determined the shapes of cyclic Galois extensions of $\QQ$ of prime degree. Note however that \cite{M-SM} uses a slightly different notion of shape: they instead restrict the quadratic form in \eqref{eqn:quadform} to the space of elements of trace zero in $\mc{O}_K$. It is possible to carry out their work with our definition and our work with theirs; the answers are slightly different as described below.\footnote{Also, when \cite{Manjul-Ari} refers to `shape' in the complex cubic case, they use the trace form instead of \eqref{eqn:quadform}, yielding an indefinite quadratic form, hence the need for the present article.}

This article grew out of the author's desire to explore an observation he made that, given a fixed quadratic resolvent in which $3$ ramifies, the shapes of real $S_3$-cubic fields sort themselves into two sets depending on whether $3$ is tamely or wildly ramified; a \emph{tame versus wild dichotomy}, if you will. Considering complex cubics, the pure cubics---that is, those of the form $\QQ(m^{1/3})$---are in many ways the simplest. These were partitioned into two sets by Dedekind, who, in the typical flourish of those days, called them Type I and Type II. In our context, \emph{pure} cubics are exactly those whose quadratic resolvent is $\QQ(\omega)$, $\omega$ being a primitive cube root of unity, and Type I (resp.\ Type~II) corresponds to $3$ being wildly (resp.\ tamely) ramified. The first theorem we prove (Theorem~\ref{thm:A}) computes the shape of a given pure cubic field and shows that, just as in the real case, pure cubic shapes exhibit a tame versus wild dichotomy.\footnote{We remark that this dichotomy also appears in \cite{M-SM} and in ongoing work of the author with Melanie Matchett Wood. In the former, it is shown that, for a given prime $\ell\geq5$, only two `shapes' (in the sense of \cite{M-SM}) are possible for cyclic extensions of degree $\ell$, and the `shape' is exactly determined by whether $\ell$ is tame or wild (for $\ell=3$, the two possibilities collapse to one, a result already appearing in \cite{Terr}, and for $\ell=2$, well, there is only one rank one lattice). We however remark that, with our definition of shape, the dichotomy in \cite{M-SM} disappears, while with their definition, \textit{all} pure cubics have rectangular `shape'. It would be very interesting to better understand the subtle difference between the two definitions. In the ongoing project with Wood, the tame versus wild dichotomy is shown to hold for totally real Galois quartic fields.} In the real cubic case, \cite{Manjul-Ari} shows that for fields of a fixed quadratic resolvent, there are only finitely many options for the shape. However, there are infinitely many possibilities for the shape of a pure cubic: indeed, Theorem~\ref{thm:B} shows that there is a bijection between pure cubic fields and their shapes! In fact, this theorem goes further and says that the shape of a pure cubic field uniquely determines it (up to isomorphism) within the collection of all cubic fields. Contrast this with Mantilla-Soler's result that says that complex cubic fields of the same discriminant have isometric integral trace forms \cite[Theorem~3.3]{M-S}.

A natural question to ask is: how are the shapes distributed? The results of \cite{Manjul-Ari}, together with the aforementioned observation, imply that, after splitting the shapes of real $S_3$-cubic fields as needed according to tame versus wild, the shapes are equidistributed amongst the finitely many options. In Theorem~\ref{thm:C}, we show that the shapes of Type~I and Type~II pure cubic fields are equidistributed within their respective one-dimensional family of shapes (viewing $\mc{L}_2$ as a subset of the upper-half plane, and noting that the natural measure on $\mc{L}_2$ is given by the hyperbolic metric on the upper-half plane). Unlike the other equidistribution results described above, the spaces of lattices under consideration have infinite measure. We are thus lead to a ``regularized'' notion of equidistribution taking into account the different growth rates that occur for fields whose shape lies in bounded versus unbounded subsets of the one-dimensional spaces of lattices.

\subsection{Statement of the main theorems}
Let $K=\QQ(m^{1/3})$ be a pure cubic field and write $m=ab^2$, where $a$ and $b$ are relatively prime, squarefree, positive integers. Note that the cube root of $m^\prime:=a^2b$ also generates $K$ (as $mm^\prime$ is a cube), so we may, and do, assume that $a>b$. We will refer to $r_K:=a/b$ as the \emph{ratio of $K$}. Then, $K$ is Type II if and only if $3\nmid m$ and $r_K\equiv\pm1\mod{9}$.

As described above, the shape of $K$ is a rank 2 lattice, up to orthogonal similitudes. For a lattice of rank $2$, it is common to consider it as living in $\CC$ and to take a representative of its equivalence class that has basis $\{1,z\}$ with $z\in\mf{H}$, the upper-half plane. Changing the basis by an element of $\SL(2,\ZZ)$ and rewriting the basis as $\{1,z^\prime\}$ corresponds to the action of $\SL(2,\ZZ)$ by fractional linear transformations on $\mf{H}$. In fact, we allow changing the basis by $\gl(2,\ZZ)$: defining
\[
	\vect{0&1\\1&0}\cdot z:=1/\ol{z}
\]
allows us to extend the action of $\SL(2,\ZZ)$ to all of $\gl(2,\ZZ)=\SL(2,\ZZ)\sqcup\vect{0&1\\1&0}\SL(2,\ZZ)$. Thus, the space of rank 2 lattices may be identified (up to some identification on the boundary) with a fundamental domain for the action of $\gl(2,\ZZ)$ on $\mf{H}$ given by
\[
	\mc{F}:=\{z=x+iy\in\mf{H}:0\leq x\leq1/2, x^2+y^2\geq1\}.
\]

Here is our first main theorem.
\begin{theoremA}\label{thm:A}\mbox{}
	\begin{itemize}
		\item Pure cubic fields of Type I have shapes lying on the imaginary axis inside $\mc{F}$; specifically, the shape of $K$ is $ir_K^{1/3}\in\mc{F}$. Thus, their shapes are rectangular.
		\item Pure cubic fields of Type II have shapes lying on the line segment $\Re(z)=1/3$, $\Im(z)>1/3$ in $\mf{H}$; specifically, the shape of $K$ is $(1+ir_K^{1/3})/3\in\mf{H}$. The shapes of Type II pure cubics are thus parallelograms with no extra symmetry.
	\end{itemize}
\end{theoremA}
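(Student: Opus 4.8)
The plan is to compute the Gram matrix of the quadratic form \eqref{eqn:quadform} restricted to the projected ring of integers, and then read off the corresponding point in $\mf{H}$. The crucial input is an explicit integral basis of $\mc{O}_K$, which differs between the two types. For $K=\QQ(m^{1/3})$ with $m=ab^2$, write $\theta=m^{1/3}=a^{1/3}b^{2/3}$ and $\theta'=m'^{1/3}=a^{2/3}b^{1/3}$, so that $\theta\theta'=ab$ and $\{1,\theta,\theta'\}$ is a $\QQ$-basis of $K$ on which the Galois conjugates act by multiplying $\theta$, $\theta'$ by the complex cube roots of unity (in conjugate fashion). By Dedekind's classical determination of the ring of integers, in the Type I (wild) case one has $\mc{O}_K=\ZZ\oplus\ZZ\theta\oplus\ZZ\theta'$, while in the Type II (tame) case one must enlarge the lattice by adjoining the element $(1+\theta+\theta')/3$ (I will verify this is algebraic integral exactly under the stated congruence condition $3\nmid m$, $r_K\equiv\pm1\mod 9$). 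So the first step is to pin down these two bases precisely.

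The second step is the geometry-of-numbers computation. Projecting $\alpha\mapsto\alpha-\tr_{K/\QQ}(\alpha)/3$ kills the trace, so the images of $\theta$ and $\theta'$ (each of trace $0$) are unchanged, and I work with the form $Q(\alpha)=\sum_\sigma|\sigma(\alpha)|^2$ on the rank-$2$ lattice spanned by the projected basis. Since $\theta,\theta'$ already have trace zero, for Type I the relevant lattice has basis $\{\theta,\theta'\}$, and I compute $Q(\theta)=\sum_\sigma|\sigma(\theta)|^2$, $Q(\theta')$, and the polarization $\langle\theta,\theta'\rangle$. Using $\sigma(\theta)=\omega^k\theta_{\RR}$ with $\theta_\RR=a^{1/3}b^{2/3}$ real, the sum $\sum_{k=0}^2|\omega^k\theta_\RR|^2=3\theta_\RR^2=3(a^2b^4)^{1/3}$, and similarly $Q(\theta')=3(a^4b^2)^{1/3}$; the cross term $\sum_\sigma\sigma(\theta)\overline{\sigma(\theta')}$ involves $\sum_k\omega^k\ol{\omega^k}\cdot(\theta\theta')_\RR$-type cancellation, which I expect to vanish, giving an orthogonal (rectangular) Gram matrix with ratio $Q(\theta')/Q(\theta)=(a^2/b^2)^{1/3}=r_K^{2/3}$. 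Converting this diagonal form $\mathrm{diag}(\lambda,\mu)$ to a point in $\mf{H}$ gives $z=i\sqrt{\mu/\lambda}=i\,r_K^{1/3}$, yielding the first bullet. For Type II, I redo the computation with the basis $\{\theta,(1+\theta+\theta')/3\}$ (or an equivalent trace-zero-adapted basis), where the new generator introduces a nonzero off-diagonal entry; reducing the resulting Gram matrix into $\mc{F}$ via $\gl_2(\ZZ)$ should land the point on the vertical line $\Re(z)=1/3$ at height $r_K^{1/3}$, giving $(1+ir_K^{1/3})/3$.

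The main obstacle I anticipate is twofold. First, carefully establishing the Type II integral basis and its congruence characterization: one must check that $(1+\theta+\theta')/3\in\mc{O}_K$ precisely when $3\nmid m$ and $r_K\equiv\pm1\mod 9$, which is a $3$-adic computation on the minimal polynomials (this is where the arithmetic subtlety of the tame/wild dichotomy is encoded). Second, performing the $\gl_2(\ZZ)$-reduction of the Type II Gram matrix correctly so that the normalized point genuinely lies at $\Re(z)=1/3$ rather than some other reduced form; I will need to track the cross term (whose real part should produce the $1/3$ shift) and confirm it survives reduction into the stated non-rectangular segment. Once the Gram entries are computed, verifying that the ratio of diagonal-to-off-diagonal matches $(1+ir_K^{1/3})/3$ is a direct but delicate manipulation, so I would carry out the orthogonalization explicitly and compare against the standard dictionary between a reduced binary quadratic form $\begin{pmatrix} A & B \\ B & C\end{pmatrix}$ and its point $z=(B+i\sqrt{AC-B^2})/A$ in $\mf{H}$.
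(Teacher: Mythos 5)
Your overall strategy---fix an explicit integral basis, project orthogonally to the trace, compute the Gram matrix, and read off the point in $\mf{H}$ via $z=(B+i\sqrt{AC-B^2})/A$---is precisely the paper's, and your Type I computation is correct and essentially identical to it: the paper's Proposition~\ref{prop:shape_of_OK} shows $\{1,\alpha,\beta\}$ maps to an orthogonal set with Gram matrix $\operatorname{diag}(3,3\alpha^2,3\beta^2)$, giving the shape $ir_K^{1/3}$ exactly as you describe.

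The genuine gap is your Type II integral basis: the element $(1+\theta+\theta')/3$ is \emph{not} an algebraic integer in general. Its norm is $\bigl(1+ab^2+a^2b-3ab\bigr)/27$, and for $a=11$, $b=2$ (so $m=44\equiv-1\mod{9}$, a Type II field) this equals $221/27\notin\ZZ$, so the promised integrality verification would fail. The correct extra integer, which the paper uses, is $\nu=(1\pm\theta+\theta^2)/3=(1\pm\theta+b\theta')/3$: the coefficient $b$ on $\theta'$ and the sign tied to $m\equiv\pm1\mod{9}$ cannot be dropped (your element differs from $\nu$ by $((1\mp1)\theta+(1-b)\theta')/3$, which is integral only when the sign is $+$ and $b\equiv1\mod{3}$). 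Consequently the projected lattice $\mc{O}_K^\perp$ is spanned by $\nu^\perp=(\pm\theta^\perp+b\theta'^\perp)/3$ and $\theta'^\perp$, whose Gram matrix is the messier $\vect{\theta^2(1+\theta^2)/3 & \theta^4/b\\ \theta^4/b & 3\theta^4/b^2}$; the paper then applies the $\gl_2(\ZZ)$ change of basis $\vect{3&-b\\1&-k}$ (with $b=3k+\epsilon$, $\epsilon=\pm1$) to reach the basis $\{\pm\theta^\perp,(\pm\theta^\perp+\epsilon\theta'^\perp)/3\}$ with Gram matrix $\vect{3\theta^2&\theta^2\\ \theta^2&(\theta^2+\theta'^2)/3}$ and point $(1+ir_K^{1/3})/3$. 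It is only a coincidence---because $\theta^\perp\perp\theta'^\perp$ makes the sign flip $\theta'^\perp\mapsto-\theta'^\perp$ an isometry---that your erroneous basis yields this same Gram matrix and hence the right answer; as written, the argument rests on a false integrality claim. A minor further point: no reduction into $\mc{F}$ is needed (or appropriate) for Type II, since the theorem locates the shape in $\mf{H}$; as the paper's remark explains, the segment $\Re(z)=1/3$, $\Im(z)>1/3$ actually meets several $\gl_2(\ZZ)$-translates of $\mc{F}$, so insisting on landing in $\mc{F}$ would break the segment into a line piece and three circular arcs.
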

This implies that pure cubic fields do indeed exhibit a tame versus wild dichotomy. The proof of this theorem will be accomplished by explicitly writing down a basis for a lattice representing the shape.

\begin{remark}
	For Type II fields, the given line segment does not lie in $\mc{F}$; instead, it lies in the union of $\mc{F}$ with its translates by $\gamma_1=W$, $\gamma_2=SU$, and $\gamma_3=SUSW$, where
	\[
		W=\vect{0&1\\1&0},\quad S=\vect{0&-1\\1&0},\quad\text{and}\quad U=\vect{1&-1\\0&1}.
	\]
	These 4 pieces correspond to $r_K^{1/3}$ in the intervals $I_0=(\sqrt{8},\infty)$, $I_1=(\sqrt{5},\sqrt{8})$, $I_2=(\sqrt{2},\sqrt{5})$, and $I_3=(1,\sqrt{2})$, respectively. To describe the location of the shapes within $\mc{F}$, one need simply act on the appropriate subsegments of the line segment by $\gamma_i^{-1}$. One then obtains the union of a line segment and three circular arcs: (the intersection of $\mc{F}$ with) the line $\Re(z)=1/3$ and the circles centred at $3/2$, $-1/2$, and $1/2$, all of radius $3/2$.
\end{remark}

An important corollary to this theorem is that the shape provides a complete invariant for pure cubic fields.
\begin{theoremA}\label{thm:B}
	If the shape of a cubic field is one of the shapes occurring in Theorem~\ref{thm:A}, then the field is the uniquely determined pure cubic field of that shape.
\end{theoremA}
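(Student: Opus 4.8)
The plan is to use Theorem~\ref{thm:A} as an explicit parametrization of the shapes of pure cubic fields and to show that this parametrization is injective, so that recovering the shape recovers the field. First I would observe that Theorem~\ref{thm:A} realizes every pure cubic shape as a point lying either on the imaginary axis (Type I) or on the vertical segment $\Re(z)=1/3$, $\Im(z)>1/3$ (Type II). A key preliminary step is to check that these two loci are disjoint as \emph{shapes}, i.e.\ that no Type I shape coincides, under the $\gl(2,\ZZ)$-action on $\mf{H}$, with a Type II shape. The Type I shapes are rectangular (they admit the extra reflection symmetry $z\mapsto-\ol{z}$ fixing the imaginary axis), whereas Theorem~\ref{thm:A} asserts that Type II shapes are parallelograms \emph{with no extra symmetry}. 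Since being rectangular is an invariant of the $\gl(2,\ZZ)$-orbit, the two families cannot overlap.

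Next I would prove injectivity within each family. For Type I, the map $K\mapsto ir_K^{1/3}$ sends $K$ to a point whose imaginary part is $r_K^{1/3}=(a/b)^{1/3}$; since the representative lies in the fundamental domain $\mc{F}$ (indeed $r_K>1$ forces $r_K^{1/3}>1$, so $x^2+y^2\geq1$ holds), two Type I fields with the same shape must have the same value of $r_K$, hence the same ratio $a/b$ in lowest terms, hence the same coprime squarefree pair $(a,b)$ with $a>b$, hence the same $m=ab^2$ and the same field $K=\QQ(m^{1/3})$. For Type II, the same reasoning applies to the invariant $\Im$-coordinate once one has identified the correct $\gl(2,\ZZ)$-representative: the real part $1/3$ already pins down the family, and the imaginary part again equals $r_K^{1/3}$ up to the explicit $\gamma_i$-translations recorded in the Remark, from which $r_K$ and therefore $(a,b)$ and $K$ are recovered.

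The main obstacle I anticipate is the bookkeeping needed to extract a genuine $\gl(2,\ZZ)$-invariant from the Type II representatives, since Theorem~\ref{thm:A} places them on the segment $\Re(z)=1/3$ rather than inside $\mc{F}$, and the Remark shows that different subintervals $I_0,\dots,I_3$ of $r_K^{1/3}$ require different matrices $\gamma_i$ to move into $\mc{F}$. I would handle this by working directly with a $\gl(2,\ZZ)$-invariant of the lattice rather than with the coordinate $z$ itself: for instance the ratio of the squared lengths of the shortest and second-shortest vectors, or equivalently the covolume-normalized shape parameter, is manifestly invariant and is a strictly monotonic function of $r_K$ on each family. Monotonicity then gives injectivity directly, bypassing the piecewise description. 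The final step is to note that the statement quantifies over \emph{all} cubic fields: one must confirm that a cubic field whose shape equals one of the listed shapes is necessarily pure, but this is immediate since by construction each such shape is attained by an explicit pure cubic field, and the shape determines the field within the pure cubic family by the injectivity just established, so no non-pure cubic can share it.
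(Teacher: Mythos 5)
Your proposal has a genuine gap, and it sits exactly where the main content of the theorem lies. Theorem~\ref{thm:B} quantifies over \emph{all} cubic fields, so one must show that no non-pure cubic field $L$ can have shape $ir_K^{1/3}$ or $(1+ir_K^{1/3})/3$. Your final step dismisses this as ``immediate'' on the grounds that each such shape is attained by a pure cubic field and that the shape is injective on the pure cubic family; that reasoning is circular, since injectivity \emph{within} the pure cubic family says nothing about a non-pure field sharing one of these shapes. The shape is emphatically not a complete invariant of cubic fields in general (the paper notes, for instance, that every real Galois cubic has hexagonal shape), so an argument specific to these shapes is required. The paper supplies it by a field-of-rationality argument: the key output of Theorem~\ref{thm:A} is that the $y$-coordinate of a pure cubic shape ($r_K^{1/3}$ or $r_K^{1/3}/3$) is an \emph{irrational} element of the real image of $K$. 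If $L$ is a real cubic, its Minkowski inner product is the trace form, so every Gram matrix of $\mc{O}_L^\perp$ is rational, and by \eqref{eqn:xycoords} any representative $z_L=x_L+iy_L$ of its shape has $x_L\in\QQ$ and $y_L$ in a quadratic extension of $\QQ$. If $L$ is a complex cubic, the Gram entries lie in the Galois closure $N_L$ of $\sigma_L(L)$, so $y_L$ lies in a quadratic extension $\wt{N}_L$ of $N_L$, and since cubic fields have no nontrivial subfields, $\wt{N}_L\cap K=\QQ$ for any pure cubic $K$. Either way $y_L$ cannot be purely cubic, so $L$ cannot have one of the listed shapes. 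None of this appears in your proposal.

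A secondary problem: your proposed shortcut for Type~II injectivity does not work. The invariant ``ratio of the squared lengths of the shortest and second-shortest vectors'' equals $|z|^2$ for the representative $z$ in $\mc{F}$, and this is \emph{not} monotone in $r_K$ across the Type~II family: on the piece with $r_K^{1/3}>\sqrt{8}$, where $(1+ir_K^{1/3})/3$ already lies in $\mc{F}$, it equals $\bigl(1+r_K^{2/3}\bigr)/9$, which is increasing, while on the piece $r_K^{1/3}\in(\sqrt{5},\sqrt{8})$, moved into $\mc{F}$ by $W\colon z\mapsto1/\ol{z}$, it equals $9/\bigl(1+r_K^{2/3}\bigr)$, which is decreasing; the two ranges overlap in $(1,3/2)$. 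So this single invariant is neither monotone on the family nor sufficient to separate shapes, and you cannot bypass the piecewise description of the Remark this way. Injectivity within each type does hold, but one sees it from the piecewise description itself: the line segment and the three circular arcs in $\mc{F}$ are distinct curves, each injectively parametrized by $r_K$. Your Type~I argument and the rectangular-versus-non-rectangular separation of the two types are fine.
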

Once we know Theorem~\ref{thm:A}, the proof of this result is a simple argument involving the field of rationality of the coordinates of the shape in $\mf{H}$. Note that there are plenty of pure cubic fields sharing the same discriminant. For instance, $\QQ(6^{1/3})$ and $\QQ(12^{1/3})$ both have discriminant $-3^3\cdot 6^2$, but their shapes are $i6^{1/3}$ and $i(\frac{3}{2})^{1/3}$, respectively. This result illustrates how different the shape and the trace form can be. Indeed, Mantilla-Soler shows in \cite[Theorem~3.3]{M-S} that two complex cubic fields have isometric integral trace form if and only if they have the same discriminant. It is natural to ask whether the shape is a complete invariant for complex cubic fields in general.\footnote{The shape fails miserably to distinguish (real) Galois cubic fields: they all have hexagonal shape!} We will present a positive answer to this question in upcoming work using a rationality argument as in the proof of Theorem~\ref{thm:B} below.

As for equidistribution, we must introduce measures on $\mathscr{S}_{\mathrm{I}}:=\{iy:y\geq1\}$ and $\mathscr{S}_{\mathrm{II}}:={\{\frac{1+iy}{3}:y\geq1\}}$. As a natural choice, we pick those induced by the hyperbolic metric on $\mf{H}$, equivalently those invariant under a subgroup of $\SL_2(\RR)$. Recall the hyperbolic line element on $\mf{H}$ is
\[
	ds=\frac{\sqrt{dx^2+dy^2}}{y}
\]
and thus induces the measure $\frac{dy}{y}$ on both $\mathscr{S}_{\mathrm{I}}$ and $\mathscr{S}_{\mathrm{II}}$. Alternatively, note that the diagonal torus $\{\operatorname{diag}(y,y^{-1})\}\subseteq\SL_2(\RR)$ acting on $i$ yields a homeomorphism from itself to the upper imaginary axis mapping the elements with $y>1$ onto $\mathscr{S}_{\mathrm{I}}$. As such, it induces an invariant measure on $\mathscr{S}_{\mathrm{I}}$, namely $\frac{dy}{y}$.\footnote{To be precise, $\operatorname{diag}(y,y^{-1})$ sends $i$ to $y^2i$, so in terms of the coordinate $iy$, the induced measure is $\frac{dy}{2y}$. We will ignore this $1/2$.} By conjugating this torus, we obtain the same for $\mathscr{S}_{\mathrm{II}}$. We denote the induced measures on $\mathscr{S}_{\textrm{I}}$ and $\mathscr{S}_{\textrm{II}}$ by $\mu_{\textrm{I}}$ and $\mu_{\textrm{II}}$, respectively. A slight complication arises as equidistribution is typically considered for finite measures, whereas on both $\mathscr{S}_{\mathrm{I}}$ and $\mathscr{S}_{\mathrm{II}}$ the measure $\frac{dy}{y}$ is merely $\sigma$-finite. This is in fact reflected in the asymptotics for pure cubic fields! Indeed, the number of pure cubic fields of discriminant bounded by $X$ is on the order of $\sqrt{X}\log(X)$ (see \cite[Theorem~1.1]{Cohen-Morra} or \cite[Theorem~8]{Manjul-Ari}), whereas those of bounded discriminant with shape lying in a finite interval (i.e.\ with $r_K$ in a finite interval) only grow like $\sqrt{X}$ (see Theorem~\ref{thm:fieldasymptotics} below). We thus ``regularize'' our notion of equidistribution and obtain the following result.
\begin{theoremA}\label{thm:C}
Define
\[
	C_{\mathrm{I}}=\frac{2C\sqrt{3}}{15}\quad\text{and}\quad C_{\mathrm{II}}=\frac{C\sqrt{3}}{10},
\]
where
\[
	C=\prod_p\left(1-\frac{3}{p^2}+\frac{2}{p^3}\right),
\]
the product being over all primes $p$. For $?=$ I, resp.~II,  and real numbers $1\leq R_1<R_2$, let $[R_1,R_2)_?$ denote the ``interval'' $i[R_1,R_2)$, resp.~$(1+i[R_1,R_2))/3$, in $\mathscr{S}_?$.
Then, for all $R_1, R_2$,
\[
	\lim_{X\rightarrow\infty}\frac{\#\left\{K\text{ of type ?}:|\Delta(K)|\leq X,\operatorname{sh}(K)\in[R_1,R_2)_?\right\}}{C_?\sqrt{X}}=\int_{[R_1,R_2)_?}d\mu_?,
\]
where $\Delta(K)$ is the discriminant of $K$ and $\operatorname{sh}(K)$ is the shape of $K$ (taken in $\mathscr{S}_?$).
\end{theoremA}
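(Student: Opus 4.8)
The plan is to translate the statement into a lattice-point count and evaluate it by a Möbius sieve. By Theorem~\ref{thm:A}, the shape of a type-? field lies in $[R_1,R_2)_?$ precisely when $r_K^{1/3}\in[R_1,R_2)$, i.e.\ when $r_K=a/b\in[s_1,s_2)$ with $s_i:=R_i^3$. Recalling the standard discriminant formulas for pure cubics, $|\Delta(K)|=27(ab)^2$ in the type~I case and $|\Delta(K)|=3(ab)^2$ in the type~II case, the bound $|\Delta(K)|\leq X$ becomes $ab\leq B_{\mathrm{I}}:=\sqrt{X}/(3\sqrt{3})$, resp.\ $ab\leq B_{\mathrm{II}}:=\sqrt{X}/\sqrt{3}$. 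Since each pure cubic field corresponds to a unique pair $(a,b)$ of coprime squarefree positive integers with $a>b$, the quantity to estimate is
\[
	N_?(X)=\#\{(a,b):\gcd(a,b)=1,\ a,b\text{ squarefree},\ ab\leq B_?,\ a/b\in[s_1,s_2),\ \text{type-? condition at }3\},
\]
the condition $a>b$ being automatic from $s_1=R_1^3\geq1$ (the diagonal contributing negligibly when $R_1=1$).

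Next I would count the coprime squarefree points in the planar region $\mc{R}_B=\{(a,b):a,b>0,\ ab\leq B,\ s_1\leq a/b<s_2\}$, which is a homothety $\mc{R}_B=\sqrt{B}\,\mc{R}_1$ of a fixed bounded region with piecewise-smooth boundary. Detecting squarefreeness of $a$ and $b$ and coprimality by Möbius inversion (sums over $d_1^2\mid a$, $d_2^2\mid b$, and $e\mid\gcd(a,b)$), the count reduces to counting ordinary lattice points, subject to a fixed congruence at $3$, in scaled copies of $\mc{R}_B$. Each such count equals its area divided by the relevant lattice index plus a boundary error of size $O(\sqrt{B})$, and summing the main terms produces $(\text{local density})\times\mathrm{area}(\mc{R}_B)$. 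The substitution $u=ab$, $t=a/b$ (Jacobian $da\,db=\tfrac{1}{2t}\,du\,dt$) gives
\[
	\mathrm{area}(\mc{R}_B)=\int_{s_1}^{s_2}\!\!\int_0^B\frac{du\,dt}{2t}=\frac{B}{2}\log\frac{s_2}{s_1}=\frac{3B}{2}\log\frac{R_2}{R_1}.
\]

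For the density, the factor at a prime $p\neq3$ is $1-3/p^2+2/p^3$ (the probability, by inclusion–exclusion, that $p^2\nmid a$, $p^2\nmid b$, and $p\nmid\gcd(a,b)$), so the off-$3$ part equals $\prod_{p\neq3}(1-3/p^2+2/p^3)=\tfrac{27}{20}C$, the generic factor at $3$ being $1-3/9+2/27=20/27$. At $3$ the type condition intervenes: counting residues modulo~$9$, the $60$ coprime squarefree states split as $12$ type~II states ($3\nmid ab$ and $a\equiv\pm b\pmod 9$) and $48$ type~I states, giving local factors $4/27$ and $16/27$. Hence the densities are $\tfrac{4}{27}\cdot\tfrac{27}{20}C=\tfrac{C}{5}$ (type~II) and $\tfrac{16}{27}\cdot\tfrac{27}{20}C=\tfrac{4C}{5}$ (type~I). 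Assembling these, $N_{\mathrm{I}}(X)\sim\tfrac{4C}{5}\cdot\tfrac{3B_{\mathrm{I}}}{2}\log\tfrac{R_2}{R_1}=C_{\mathrm{I}}\sqrt{X}\log\tfrac{R_2}{R_1}$ and $N_{\mathrm{II}}(X)\sim\tfrac{C}{5}\cdot\tfrac{3B_{\mathrm{II}}}{2}\log\tfrac{R_2}{R_1}=C_{\mathrm{II}}\sqrt{X}\log\tfrac{R_2}{R_1}$; since $\int_{[R_1,R_2)_?}d\mu_?=\int_{R_1}^{R_2}\tfrac{dy}{y}=\log\tfrac{R_2}{R_1}$, dividing by $C_?\sqrt{X}$ yields the claimed limit.

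The main obstacle is the uniform control of the error term. The region $\mc{R}_B$ is a hyperbolic sliver of diameter $\asymp\sqrt{B}$ whose area is $\asymp B\asymp\sqrt{X}$, so to beat the normalization $\sqrt{X}$ one must show that the total sieve error is $o(\sqrt{X})$. This requires accumulating the per-modulus boundary errors over the squarefree- and coprimality-detecting moduli and bounding the result by $O(B^{1/2+\epsilon})=o(B)$, e.g.\ by a Dirichlet-hyperbola splitting adapted to $\mc{R}_B$ together with the standard tail estimate for the squarefree sieve; verifying that the fixed congruence at $3$, being coprime to all sieve moduli, does not disturb these bounds is routine. By contrast, the area and density computations above are elementary.
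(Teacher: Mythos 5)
Your proposal is correct in substance, arrives at the right constants, and takes a genuinely different route from the paper. Both arguments share the same unavoidable first step: by Theorem~\ref{thm:A} and the discriminant formulas of Lemma~\ref{lem:intbasis}, the count becomes a count of coprime squarefree pairs $(a,b)$ with $ab\leq B_?$ and $a/b\in[R_1^3,R_2^3)$, with a congruence condition modulo $9$ separating Type~I from Type~II. From there the paper works one variable at a time (Theorem~\ref{prop:TypeIIabcount}): it decomposes the hyperbolic pie slice into four regions of the form $\{a\leq\sqrt{\rho N},\ b\leq N/a\}$ and $\{a\leq\sqrt{\rho N},\ b\leq a/\rho\}$, evaluates the inner sum over $b$ by a squarefree sieve with Dirichlet characters (Lemma~\ref{lem:bsum}), and then evaluates the remaining sum over $a$ of the multiplicative function $\mu^2(a)\prod_{p\mid a}\frac{1}{p+1}$ by Perron's formula and a residue computation (Lemma~\ref{lem:perron}); the logarithmic main term comes from the pole structure, and the secondary constants ($\gamma$, $\kappa$, $\sum_{p\mid n}\frac{\log p}{p+2}$) cancel between regions. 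You instead sieve both variables simultaneously and use plane geometry of numbers: main term equals area of the sliver times a product of local densities, with the logarithm produced directly by the area integral $\int\frac{du\,dt}{2t}$ and the arithmetic constant by the Euler product, including the mod-$9$ residue count ($60$ admissible states, $12$ of Type~II, $48$ of Type~I) that reproduces exactly the densities $\frac{C}{5}$ and $\frac{4C}{5}$ appearing in the paper's Theorem~\ref{prop:TypeIIabcount}; your final constants and the identification $\int_{[R_1,R_2)_?}d\mu_?=\log(R_2/R_1)$ all check out. Your route is more elementary and conceptually transparent (no Dirichlet series or contour shifting), at the cost of messier error bookkeeping; the paper's route buys clean, power-saving error terms $O(N^{3/4+\epsilon})$ with the region decomposition doing the geometric work. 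One caution: your asserted total sieve error $O(B^{1/2+\epsilon})$ is almost certainly too optimistic for the naive accumulation you describe, since the per-modulus $O(1)$ terms and the truncation tails of the three Möbius sums must be balanced, typically yielding something like $O(B^{1-\delta})$ for some small $\delta>0$; but as only $o(B)$ is needed, this does not affect the validity of your plan---it just means the error analysis must be carried out with explicit truncation parameters rather than dismissed as routine.
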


\begin{remark}\mbox{}
	\begin{enumerate}
		\item For the usual, unregularized, notion of equidistribution, the left-hand side above would be
		\[
			\lim_{X\rightarrow\infty}\frac{\#\left\{K:|\Delta(K)|\leq X,\operatorname{sh}(K)\in[R_1,R_2)_?\right\}}{\#\left\{K:|\Delta(K)|\leq X\right\}},
		\]
		where the denominator is $C_?\sqrt{X}\log(X)+o(\sqrt{X})$. Given the different growth rates of the numerator and the denominator, this limit is always $0$.
		\item We phrase this theorem in a more measure-theoretic way below (see Theorem~\ref{thm:C'}), defining a sequence of measures $\mu_{?,X}$ that converges weakly to $\mu_?$.
	\end{enumerate}
\end{remark}

\section{Determining the shape}\label{sec:proofofthmA}
In this section, we provide integral bases for pure cubic fields (\S\ref{sec:int_bases}) and go on to use these bases to explicitly determine bases of $\mc{O}_K^\perp$ (\S\ref{sec:shape}). These calculations directly prove Theorem~\ref{thm:A}. In \S\ref{sec:thumb}, we prove Theorem~\ref{thm:B}.

\subsection{Some basic facts about pure cubic fields}\label{sec:int_bases}
We briefly determine an integral basis and the discriminant of a pure cubic field, and explain how ramification at $3$ is what distinguishes Type~I from Type~II.

Every pure cubic field $K$ corresponds to exactly one pair $(m,m^\prime)$, where $m=ab^2$, $m^\prime=a^2b$, and $a$ and $b$ are two positive, relatively prime, squarefree integers whose product is at least $2$. Let $\alpha$ and $\beta$ be the roots in $K$ of $x^3-m$ and $x^3-m^\prime$, respectively, so that $\beta=\alpha^2/b$, $\alpha=\beta^2/a$, and $K=\QQ(\alpha)=\QQ(\beta)$. The discriminant of $x^3-m$ is $-3^3a^2b^4$, so that $\Delta(K)\mid\gcd(-3^3a^2b^4, 3^3a^4b^2)=-3^3a^2b^2$. The fact that $x^3-m$ (resp.\ $x^3-m^\prime$) is Eisenstein at all primes dividing $a$ (resp.\ $b$) implies that the index $[\mathcal{O}_K:\ZZ[\alpha,\beta]]$ is relatively prime to $ab$, and hence divides $3$. Thus, if $3|m$, then the index is $1$ and $\{1,\alpha,\beta\}$ is an integral basis of $\mc{O}_K$. Otherwise, since $[\mathcal{O}_K:\ZZ[\alpha,\beta]]=[\mathcal{O}_K:\ZZ[\alpha-m,\beta]]$, we may consider the minimal polynomial $(x+m)^3-m$ of $\alpha-m$. It is Eisenstein at $3$ if and only if $m\not\equiv\pm1\mod{9}$,\footnote{In fact, it is Eisenstein if and only if $m^3\not\equiv m\mod{9}$, but the only cubes modulo $9$ are $0,\pm1$, and $3\nmid m$, by assumption. Note also that the condition $m\equiv\pm1\mod{9}$ is equivalent to $r_K\equiv\pm1\mod{9}$; indeed, $r_K=m/b^3$.} in which case, again, $\{1,\alpha,\beta\}$ is an integral basis of $\mc{O}_K$. For $m\equiv\pm1\mod{9}$,  (the ``Type II'' fields) one may check by hand that the element $\nu=(1\pm\alpha+\alpha^2)/3$ is integral. Clearly, $[\ZZ[\nu,\beta]:\ZZ[\alpha,\beta]]=3$, so $\{1,\nu,\beta\}$ is then an integral basis of $\mc{O}_K$. This shows that the discriminant of $K$ is $-3^3a^2b^2$ (resp.\ $-3a^2b^2$) for Type I (resp.\ Type II). We summarize these results in the following lemma.

\begin{lemma}\label{lem:intbasis}
	With the above notation,
	\begin{itemize}
		\item the discriminant of a pure cubic of Type I is $\Delta(K)=-3^3a^2b^2$ and $\{1,\alpha,\beta\}$ is an integral basis;
		\item the discriminant of a pure cubic of Type II is $\Delta(K)=-3a^2b^2$ and $\{1,\nu,\beta\}$ is an integral basis.
	\end{itemize}
\end{lemma}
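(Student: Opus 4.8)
The plan is to organize the computation sketched before the statement into four steps, the essential content being the determination of the index $[\mathcal{O}_K:\ZZ[\alpha,\beta]]$ and especially its $3$-part. I would begin by computing $\disc(\ZZ[\alpha,\beta])$ directly from the trace pairing. Using $\alpha^2=b\beta$, $\beta^2=a\alpha$, and $\alpha\beta=ab$, one finds $\tr(1)=3$, while $\tr(\alpha)=\tr(\beta)=\tr(\alpha^2)=\tr(\beta^2)=0$ and $\tr(\alpha\beta)=3ab$, so the Gram matrix of $\{1,\alpha,\beta\}$ is
\[
	\begin{pmatrix}3&0&0\\0&0&3ab\\0&3ab&0\end{pmatrix},
\]
of determinant $-3^3a^2b^2$. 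Thus $\disc(\ZZ[\alpha,\beta])=-3^3a^2b^2$, which will also supply the Type~I discriminant directly.

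Next I would bound the index. Since $x^3-m$ is Eisenstein at every prime $p\mid a$ and $x^3-m^\prime$ is Eisenstein at every $p\mid b$, no such $p$ divides $[\mathcal{O}_K:\ZZ[\alpha]]$ or $[\mathcal{O}_K:\ZZ[\beta]]$ respectively; as $\ZZ[\alpha,\beta]$ contains both $\ZZ[\alpha]$ and $\ZZ[\beta]$, its index in $\mathcal{O}_K$ is coprime to $ab$. Feeding this into $\disc(\ZZ[\alpha,\beta])=[\mathcal{O}_K:\ZZ[\alpha,\beta]]^2\Delta(K)=-3^3a^2b^2$ and comparing $3$-adic valuations forces $[\mathcal{O}_K:\ZZ[\alpha,\beta]]\mid3$.

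The crux—and the step I expect to be the main obstacle, as it is exactly where the tame/wild dichotomy appears—is pinning down whether the index is $1$ or $3$ through the local behaviour at $3$. If $3\mid m$, then exactly one of $x^3-m$, $x^3-m^\prime$ is Eisenstein at $3$ (according to whether $3\mid a$ or $3\mid b$), forcing index $1$. If $3\nmid m$ but $m\not\equiv\pm1\pmod{9}$, I would pass to $\alpha-m$, whose minimal polynomial $(x+m)^3-m$ is Eisenstein at $3$ precisely in this range, again giving index $1$. The delicate case is $m\equiv\epsilon\pmod{9}$ with $\epsilon=\pm1$: here I would verify that $\nu=(1+\epsilon\alpha+\alpha^2)/3$ is an algebraic integer by exhibiting its characteristic polynomial over $\QQ$, the decisive point being the constant term $N(\nu)=(m-\epsilon)^2/27$, which lies in $\ZZ$ exactly when $9\mid m-\epsilon$ (the trace $\tr(\nu)=1$ and middle coefficient $(1-\epsilon m)/3$ are integral under the weaker congruence mod $3$). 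Since $\alpha^2=b\beta$ gives $3\nu=1+\epsilon\alpha+b\beta$, one sees $[\ZZ[\nu,\beta]:\ZZ[\alpha,\beta]]=3$; as $\nu\in\mathcal{O}_K$ while the total index divides $3$, the index must equal $3$ and $\mathcal{O}_K=\ZZ[\nu,\beta]$.

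Finally I would read off the discriminants from the integral bases just produced: in the index-$1$ (Type~I) regimes $\Delta(K)=\disc(\ZZ[\alpha,\beta])=-3^3a^2b^2$ with basis $\{1,\alpha,\beta\}$, while in the index-$3$ (Type~II) regime $\Delta(K)=\disc(\ZZ[\alpha,\beta])/9=-3a^2b^2$ with basis $\{1,\nu,\beta\}$. The entire difficulty is concentrated in the local analysis at $3$: correctly separating the three regimes and, in the borderline regime, confirming both that $\nu$ is integral and that it accounts for the full index.
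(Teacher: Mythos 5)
Your proof is correct and follows essentially the same route as the paper: bound the index $[\mathcal{O}_K:\ZZ[\alpha,\beta]]$ to a divisor of $3$ via the Eisenstein conditions at primes dividing $ab$, then settle the power of $3$ by the same case analysis at $3$ (Eisenstein for $(x+m)^3-m$ when $3\nmid m$ and $m\not\equiv\pm1\pmod{9}$, integrality of $\nu$ and the index-$3$ sublattice argument otherwise). The only cosmetic differences are that you compute $\disc(\ZZ[\alpha,\beta])=-3^3a^2b^2$ directly from the trace-form Gram matrix where the paper bounds $\Delta(K)$ by the gcd of the discriminants of $x^3-m$ and $x^3-m'$, and that you make explicit (via the characteristic polynomial $x^3-x^2+\frac{1-\epsilon m}{3}x-\frac{(m-\epsilon)^2}{27}$) the integrality check of $\nu$ that the paper leaves as ``check by hand.''
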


The two possible factorizations of a ramified prime in a cubic field are $\mf{p}_1\mf{p}_2^2$ and $\mf{p}^3$. The prime $3$ being tamely ramified means that $3=\mf{p}_1\mf{p}_2^2$, where $\mf{p}_1$ is unramified and $\mf{p}_2$ is tamely ramified. Consequently, $\mf{p}_2=\mf{p}_2^{2-1}$ exactly divides the different of $K/\QQ$. Since $\mf{p}_2$ has norm $3$, this implies $3$ exactly divides $\Delta(K)$. On the other hand, if $3$ is wild, then $3=\mf{p}^3$, in which case $\mf{p}^3$ divides the different. Again, $\mf{p}$ has norm $3$ so $3^3$ divides $\Delta(K)$.

\subsection{The shape}\label{sec:shape}
Let $\omega:=e^{2\pi i/3}\in\CC$ and let $m^{1/3}$ denote the real cube root of $m$. Let $j_\RR:K\rightarrow\RR^3$ denote the embedding into the ``Minkowski space'' of $K$ (following Neukirch's normalizations, see \cite[\S{I.5}]{Neukirch}): if $\sigma$ denotes the unique real embedding and $\tau$ denotes the complex embedding sending $\alpha$ to $\omega m^{1/3}$, then
\[
	j_\RR(a):=\big(\sigma(a),\Re(\tau(a)),\Im(\tau(a))\big).
\]
We also let $j:K\rightarrow\CC^3$ be given by $j(a)=(\sigma(a),\tau(a),\ol{\tau}(a))$. It is understood that the space $\RR^3$ is equipped with the inner product $\lr$ given by the diagonal matrix $\operatorname{diag}(1,2,2)$. This is the inner product obtained by restricting the standard Hermitian inner product on $\CC^3$ to the image of $j$, identified with $\RR^3$ as in \cite[\S{I.5}]{Neukirch}. As it is unlikely to cause confusion, we shall abuse notation and write $\alpha$ and $\beta$ to sometimes mean $\sigma(\alpha)=m^{1/3}$ and $\sigma(\beta)=m^{\prime1/3}$ depending on context.

Given a rank 2 lattice $\Lambda$ with Gram matrix\footnote{Recall that a Gram matrix for a lattice $\Lambda$ is the symmetric matrix whose $(i,j)$-entry is the inner product of the $i$th and $j$th basis vectors, for some choice of basis.} $B=(B_{ij})$ the associated point in $\mf{H}$ is $z=x+iy$, where
\begin{equation}\label{eqn:xycoords}
	x=\frac{B_{0,1}}{B_{0,0}},\quad y=\sqrt{\frac{B_{1,1}}{B_{0,0}}-x^2}.
\end{equation}

The starting point is the following proposition which immediately proves Theorem~\ref{thm:A} for Type I fields.

\begin{proposition}\label{prop:shape_of_OK}
	For any pure cubic field $K$, the image of the set $\{1,\alpha,\beta\}$ under $j_\RR$ is an orthogonal set whose Gram matrix is
	\[	\vect{3\\& 3\alpha^2\\&&3\beta^2}.
	\]
\end{proposition}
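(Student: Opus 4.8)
The plan is to compute directly the three pairwise inner products $\langle j_\RR(u),j_\RR(v)\rangle$ for $u,v\in\{1,\alpha,\beta\}$, using the definition of the inner product as the diagonal form $\operatorname{diag}(1,2,2)$ in the Minkowski coordinates. The key observation is that, because the inner product on $\RR^3$ is exactly the restriction of the standard Hermitian inner product on $\CC^3$ via $j$, I can compute everything through the cleaner embedding $j$ and the formula
\[
	\langle j_\RR(u),j_\RR(v)\rangle=\sigma(u)\sigma(v)+\tau(u)\overline{\tau(v)}+\overline{\tau(u)}\tau(v),
\]
which for $u,v\in K$ real amounts to summing $\sigma(u\bar v)$ over all three embeddings — i.e. it is the quadratic-form expression from \eqref{eqn:quadform} polarized. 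Concretely, $\langle j_\RR(u),j_\RR(v)\rangle=\sum_{\rho\in\Hom(K,\CC)}\rho(u)\overline{\rho(v)}$, and since $u,v$ lie in $K$ with all conjugates explicit, this is a finite sum I can evaluate.

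First I would record the conjugates: $\sigma(\alpha)=m^{1/3}$, $\tau(\alpha)=\omega m^{1/3}$, $\overline{\tau}(\alpha)=\omega^2 m^{1/3}$, and similarly $\sigma(\beta)=m'^{1/3}$, $\tau(\beta)=\omega\,m'^{1/3}$, $\overline{\tau}(\beta)=\omega^2 m'^{1/3}$. The three diagonal entries are then immediate. For $B_{0,0}$, taking $u=v=1$ gives $\langle j_\RR(1),j_\RR(1)\rangle=1+1+1=3$. For $B_{1,1}$, taking $u=v=\alpha$ gives $(m^{1/3})^2(1+\omega\overline\omega+\omega^2\overline{\omega^2})=3\alpha^2$ since $|\omega|=1$; likewise $B_{2,2}=3\beta^2$. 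The substance of the proposition is really the three off-diagonal vanishings. For $\langle j_\RR(1),j_\RR(\alpha)\rangle$ I get $m^{1/3}(1+\overline\omega+\omega)=m^{1/3}\cdot 0=0$, using $1+\omega+\omega^2=0$; the pairing $\langle j_\RR(1),j_\RR(\beta)\rangle$ vanishes identically. The only slightly less transparent one is $\langle j_\RR(\alpha),j_\RR(\beta)\rangle$, which equals $m^{1/3}m'^{1/3}(1+\omega\overline\omega+\omega^2\overline{\omega^2})$; since each $\omega^k\overline{\omega^k}=1$, this would give $3(mm')^{1/3}\neq0$, so I must instead pair the \emph{distinct} rotations correctly: the cross term is $\sum_\rho\rho(\alpha)\overline{\rho(\beta)}=(mm')^{1/3}(1+\omega\overline\omega+\omega^2\overline{\omega^2})$, and I need to recheck whether the conjugates of $\alpha$ and $\beta$ are rotated by the same root of unity in each embedding. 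They are — both are scaled by $\omega^k$ under the $k$-th embedding — so I should instead use the relation $\beta=\alpha^2/b$ to rewrite the pairing and exploit that the conjugates of $\alpha\beta=\alpha^3/b=m/b$ are all equal and real, making the phase factors cancel only through $1+\omega^3+\omega^6$; the correct bookkeeping is that $\langle j_\RR(\alpha),j_\RR(\beta)\rangle=\tfrac1b\langle j_\RR(\alpha),j_\RR(\alpha^2)\rangle$ and $\rho(\alpha)\overline{\rho(\alpha^2)}=\omega^k m^{1/3}\cdot\overline{\omega^{2k}m^{2/3}}=\omega^{-k}m$, whose sum over $k=0,1,2$ is $m(1+\omega^2+\omega)=0$.

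The main obstacle, then, is precisely this last off-diagonal term: it requires being careful about which power of $\omega$ multiplies each conjugate, because a naive pairing of moduli (giving $1+1+1$) would wrongly suggest non-orthogonality. Once the phase factors $\omega^{-k}$ are tracked correctly and summed via $\sum_{k=0}^{2}\omega^{k}=0$, orthogonality falls out and the Gram matrix is the claimed diagonal matrix. Feeding $B_{0,0}=3$ and $B_{1,1}=3\alpha^2$ into \eqref{eqn:xycoords} gives $x=0$ and $y=\sqrt{\alpha^2}=\alpha=m^{1/3}$ for the sublattice spanned by $\{1,\alpha\}$, but the shape is read off from the projected lattice $\mc{O}_K^\perp$; in the Type I case the integral basis is exactly $\{1,\alpha,\beta\}$, so Lemma~\ref{lem:intbasis} lets me conclude directly, after projecting away the trace and comparing $B_{2,2}/B_{1,1}=\beta^2/\alpha^2=(m'/m)^{2/3}=r_K^{-2/3}$, that the shape is $i\,r_K^{1/3}$, proving Theorem~\ref{thm:A} for Type I.
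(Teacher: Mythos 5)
Your final ``correct bookkeeping'' is exactly the paper's proof: the paper simply records $j(1)=(1,1,1)$, $j(\alpha)=(\alpha,\omega\alpha,\omega^2\alpha)$, $j(\beta)=(\beta,\omega^2\beta,\omega\beta)$ and takes Hermitian dot products, and your computation $\rho(\alpha)\overline{\rho(\alpha^2)}=\omega^{k}m^{1/3}\cdot\overline{\omega^{2k}m^{2/3}}=\omega^{-k}m$, summed over $k$ to give $m(1+\omega^2+\omega)=0$, is that same calculation. So the approach and the decisive step are right. However, your write-up contains intermediate assertions that flatly contradict the proposition (and your own ending), and they must be excised, not papered over. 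First, your table of conjugates is wrong: since $\beta=\alpha^2/b$ and $\tau(\alpha)=\omega m^{1/3}$, you have $\tau(\beta)=\tau(\alpha)^2/b=\omega^2 m'^{1/3}$, not $\omega m'^{1/3}$; the conjugates of $\beta$ carry the \emph{inverse} phases of those of $\alpha$. Second, the sentence claiming that $\alpha$ and $\beta$ ``are both scaled by $\omega^k$ under the $k$-th embedding'' is false, and it is refuted two lines later by your own identity $\rho(\alpha^2)=\omega^{2k}m^{2/3}$; if that claim were true, the cross term would indeed equal $3(mm')^{1/3}\neq0$ and the proposition would be false. Third, the aside about the phases cancelling ``through $1+\omega^3+\omega^6$'' is empty, since $1+\omega^3+\omega^6=3$. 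The entire content of the orthogonality $\langle j_\RR(\alpha),j_\RR(\beta)\rangle=0$ is precisely that the two sets of conjugates are twisted oppositely, so that $\sum_k\rho(\alpha)\overline{\rho(\beta)}=\alpha\beta\sum_k\omega^k\overline{\omega^{2k}}=\alpha\beta\sum_k\omega^{-k}=0$.

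One further slip in your closing paragraph (tangential to the proposition itself): $B_{2,2}/B_{1,1}=\beta^2/\alpha^2=(m'/m)^{2/3}=r_K^{2/3}$, not $r_K^{-2/3}$, since $m'/m=a/b=r_K$; your conclusion $y=\beta/\alpha=r_K^{1/3}$ and the shape $ir_K^{1/3}$ are nevertheless correct.
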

\begin{proof}
	First off,
	\[
		j(1)=(1,1,1),\quad j(\alpha)=(\alpha,\omega\alpha,\omega^2\alpha),\quad\text{and}\quad j(\beta)=(\beta,\omega^2\beta,\omega\beta).
	\]
	Thus, the proposition comes down to computing the Hermitian dot products of these vectors.
\end{proof}

This yields Theorem~\ref{thm:A} for Type I fields as follows. The set $\{j_\RR(\alpha),j_\RR(\beta)\}$ is a basis of $\mc{O}_K^\perp$ whose Gram matrix is
\[
	\vect{3\alpha^2\\&3\beta^2}.
\]
Using the conversion to $z=x+iy\in\mf{H}$ in Equation~\eqref{eqn:xycoords} gives $x=0$ and $y=(\beta/\alpha)=(a/b)^{1/3}=r_K^{1/3}$.

To study Type II fields, we will need to first know the image of $\nu$ in $\mc{O}_K^\perp$.
\begin{lemma}
	For $a\in K$, the image $a^\perp$ of $a$ in $\mc{O}_K^\perp$ is
	\[
		a^\perp=j_\RR(a)-\frac{\langle j_\RR(a),j_\RR(1)\rangle}{3}j_\RR(1).
	\]
\end{lemma}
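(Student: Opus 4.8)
The plan is to recognize the claimed expression as the orthogonal projection of $j_\RR(a)$ onto the hyperplane orthogonal to $j_\RR(1)$, and to identify that projection with the class of $a$ in the quotient $\mc{O}_K^\perp=\RR^3/\RR j_\RR(1)$. Recall that $\mc{O}_K^\perp$ was defined as the image of $j_\RR(\mc{O}_K)$ under the quotient map $\pi\colon\RR^3\to\RR^3/\RR j_\RR(1)$, so that $a^\perp=\pi(j_\RR(a))$. To turn this abstract class into a concrete vector, I would use the orthogonal splitting $\RR^3=\RR j_\RR(1)\oplus(\RR j_\RR(1))^\perp$ afforded by the weighted inner product $\langle\cdot,\cdot\rangle=\operatorname{diag}(1,2,2)$: the restriction of $\pi$ to $(\RR j_\RR(1))^\perp$ is an isomorphism onto the quotient, and under this identification $a^\perp$ is exactly the component of $j_\RR(a)$ lying in $(\RR j_\RR(1))^\perp$, i.e.\ the orthogonal projection of $j_\RR(a)$ away from the line $\RR j_\RR(1)$.

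The second step is to write that projection down. By the standard projection formula it is
\[
	j_\RR(a)-\frac{\langle j_\RR(a),j_\RR(1)\rangle}{\langle j_\RR(1),j_\RR(1)\rangle}j_\RR(1),
\]
and here $\langle j_\RR(1),j_\RR(1)\rangle=3$ is precisely the $(0,0)$-entry of the Gram matrix computed in Proposition~\ref{prop:shape_of_OK}. Substituting this value into the denominator yields the asserted formula.

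Equivalently, and more closely tied to the definition of the shape in the introduction, one can invoke linearity of $j_\RR$ to write $a^\perp=j_\RR\bigl(a-\tr_{K/\QQ}(a)/3\bigr)=j_\RR(a)-\tfrac{\tr_{K/\QQ}(a)}{3}j_\RR(1)$, and then verify the identity $\langle j_\RR(a),j_\RR(1)\rangle=\tr_{K/\QQ}(a)$. Indeed, since $j_\RR(1)=(1,1,0)$ and the inner product has matrix $\operatorname{diag}(1,2,2)$, the left-hand side equals $\sigma(a)+2\Re\tau(a)=\sigma(a)+\tau(a)+\ol{\tau}(a)$, which is the trace. There is no genuine obstacle in this lemma; the only points requiring care are the bookkeeping of the weighted inner product and the identification of the quotient $\RR^3/\RR j_\RR(1)$ with the orthogonal complement $(\RR j_\RR(1))^\perp$, both of which are routine once the value $\langle j_\RR(1),j_\RR(1)\rangle=3$ from Proposition~\ref{prop:shape_of_OK} is in hand.
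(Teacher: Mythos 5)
Your proof is correct and takes essentially the same approach as the paper, whose entire proof is the one-line remark that the lemma ``follows from linear algebra and the simple fact that $\langle j_\RR(1),j_\RR(1)\rangle=3$''; your write-up just makes that linear algebra explicit (the orthogonal projection formula together with the identification of $\RR^3/\RR j_\RR(1)$ with the orthogonal complement of $j_\RR(1)$). Your alternative verification via $\langle j_\RR(a),j_\RR(1)\rangle=\tr_{K/\QQ}(a)$ is the same computation viewed through the projection map $\alpha\mapsto\alpha-\tr_{K/\QQ}(\alpha)/n$ used to define the shape in the introduction, so it is a restatement rather than a genuinely different route.
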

This follows from linear algebra and the simple fact that $\langle j_\RR(1),j_\RR(1)\rangle=3$.

By Proposition~\ref{prop:shape_of_OK}, $\alpha^\perp=j_\RR(\alpha)$ and $\beta^\perp=j_\RR(\beta)$. Since $j_\RR(\nu)=(j_\RR(1)\pm j_\RR(\alpha)+bj_\RR(\beta))/3$, we get that
\begin{equation}
	\nu^\perp=j_\RR(\nu)-\frac{1}{3}j_\RR(1)=\frac{1}{3}(\pm\alpha^\perp+b\beta^\perp).
\end{equation}

\begin{lemma}
	The Gram matrix of the basis $\{\nu^\perp,\beta^\perp\}$ of $\mc{O}_K^\perp$ is
	\[
		\vect{\ds\frac{\alpha^2(1+\alpha^2)}{3} & \ds\frac{\alpha^4}{b} \\ \ds\frac{\alpha^4}{b} & \ds\frac{3\alpha^4}{b^2}}.
	\]
\end{lemma}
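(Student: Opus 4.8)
The plan is to compute the three entries of the Gram matrix directly from the expression $\nu^\perp=\frac{1}{3}(\pm\alpha^\perp+b\beta^\perp)$ and the already-established orthogonality of $\alpha^\perp=j_\RR(\alpha)$ and $\beta^\perp=j_\RR(\beta)$. By Proposition~\ref{prop:shape_of_OK}, we know $\langle\alpha^\perp,\alpha^\perp\rangle=3\alpha^2$, $\langle\beta^\perp,\beta^\perp\rangle=3\beta^2$, and $\langle\alpha^\perp,\beta^\perp\rangle=0$, so the entire computation reduces to bilinear bookkeeping followed by rewriting everything in terms of $\alpha$ and $b$ alone.

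First I would expand $\langle\nu^\perp,\nu^\perp\rangle=\frac{1}{9}\big(\langle\alpha^\perp,\alpha^\perp\rangle+b^2\langle\beta^\perp,\beta^\perp\rangle\big)$, the cross term vanishing by orthogonality; the $\pm$ sign disappears upon squaring. This gives $\frac{1}{9}(3\alpha^2+3b^2\beta^2)$. The key relation to invoke is $\beta=\alpha^2/b$, hence $\beta^2=\alpha^4/b^2$, so $b^2\beta^2=\alpha^4$ and the $(1,1)$-entry becomes $\frac{1}{9}(3\alpha^2+3\alpha^4)=\frac{\alpha^2(1+\alpha^2)}{3}$, as claimed. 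Next, $\langle\nu^\perp,\beta^\perp\rangle=\frac{1}{3}(\pm\langle\alpha^\perp,\beta^\perp\rangle+b\langle\beta^\perp,\beta^\perp\rangle)=\frac{1}{3}\cdot b\cdot 3\beta^2=b\beta^2=b\cdot\alpha^4/b^2=\alpha^4/b$, giving the off-diagonal entry. Finally, the $(2,2)$-entry is simply $\langle\beta^\perp,\beta^\perp\rangle=3\beta^2=3\alpha^4/b^2$.

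There is essentially no obstacle here: the only subtlety to flag is that one must consistently substitute $\beta^2=\alpha^4/b^2$ (equivalently use $b\beta=\alpha^2$) to express the answer purely in terms of $\alpha$ and $b$, matching the stated matrix. I would note that the $\pm$ ambiguity from the two congruence classes $m\equiv\pm1\pmod 9$ genuinely drops out of the diagonal entries and only affects the sign of the cross term in $\nu^\perp$, which does not appear in the off-diagonal entry once multiplied against the orthogonal $\beta^\perp$. Thus the Gram matrix is independent of the Type~II subcase, which is the reassuring consistency check that makes this computation mechanical rather than delicate.
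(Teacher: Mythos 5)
Your computation is correct and follows exactly the paper's own proof: expand bilinearly using the orthogonality and norms from Proposition~\ref{prop:shape_of_OK}, then substitute $b\beta=\alpha^2$ to express everything in terms of $\alpha$ and $b$. The observation that the $\pm$ sign drops out is a nice consistency check, but otherwise there is nothing to add.
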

\begin{proof}
	Since $\langle\alpha^\perp,\beta^\perp\rangle=0$, we have
	\[
		\langle\nu^\perp,\nu^\perp\rangle=\frac{1}{9}(\langle\alpha^\perp,\alpha^\perp\rangle+b^2\langle\beta^\perp,\beta^\perp\rangle)=\frac{1}{9}(3\alpha^2+3b^2\beta^2).
	\]
	This is the claimed value since $b\beta=\alpha^2$. Furthermore,
	\[
		\langle\nu^\perp,\beta^\perp\rangle=\frac{b}{3}\langle\beta^\perp,\beta^\perp\rangle=b\beta^2=\alpha^4/b.
	\]
	Finally, we already know the inner product of $\beta^\perp$ with itself.	
\end{proof}
This basis is not terribly pleasing from the point of view of shapes. Making the following change of basis yields a nicer Gram matrix and proves Theorem~\ref{thm:A} for Type II fields.
\begin{lemma}
	Let $\epsilon\in\{\pm1\}$ and $k\in\ZZ$ be such that $b=3k+\epsilon$. Define
	\[
		\gamma=\vect{3&-b\\1&-k}.
	\]
	The basis $\{v_1,v_2\}$ of $\mc{O}_K^\perp$ given by
	\[
		\vect{v_1\\v_2}=\gamma\vect{\nu^\perp\\\beta^\perp}
	\]
	has the Gram matrix
	\[
		\vect{3\alpha^2 & \alpha^2 \\ \alpha^2 & \ds\frac{\alpha^2+\beta^2}{3}}.
	\]
	The associated point in $\mf{H}$ is $\frac{1}{3}+i\frac{r_K^{1/3}}{3}$.
\end{lemma}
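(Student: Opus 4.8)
The plan is to proceed in three stages: first confirm that $\gamma$ is a genuine change of basis of the lattice, then compute the new Gram matrix $\gamma B\gamma^T$ from the previous one by bilinearity, and finally convert the result to a point of $\mf{H}$ via \eqref{eqn:xycoords}. The very first thing I would check is that $\gamma\in\gl(2,\ZZ)$, so that $\{v_1,v_2\}$ really is a $\ZZ$-basis of $\mc{O}_K^\perp$. This is immediate: $\det\gamma=3(-k)-(-b)(1)=b-3k=\epsilon=\pm1$, the penultimate equality being the defining relation $b=3k+\epsilon$.

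Next I would expand the three inner products $\langle v_i,v_j\rangle$ using the Gram matrix of $\{\nu^\perp,\beta^\perp\}$ from the preceding lemma together with the identity $b\beta=\alpha^2$ (equivalently $\beta^2=\alpha^4/b^2$). Writing $v_1=3\nu^\perp-b\beta^\perp$ and $v_2=\nu^\perp-k\beta^\perp$, the entries $\langle v_1,v_1\rangle$ and $\langle v_1,v_2\rangle$ collapse to $3\alpha^2$ and $\alpha^2$ respectively once the $\alpha^4$-terms cancel; notably, the cross terms involving $k$ cancel in $\langle v_1,v_2\rangle$ for \emph{any} value of $k$.

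The step I expect to be the crux is the bottom-right entry $\langle v_2,v_2\rangle$. Expanding gives
\[
	\langle v_2,v_2\rangle=\frac{\alpha^2}{3}+\alpha^4\left(\frac13-\frac{2k}{b}+\frac{3k^2}{b^2}\right),
\]
and matching this against the target value $\frac{\alpha^2+\beta^2}{3}=\frac{\alpha^2}{3}+\frac{\alpha^4}{3b^2}$ reduces, after clearing $3b^2$, to the identity $b^2-6kb+9k^2=1$, that is, $(b-3k)^2=1$. This is precisely where the choice $b=3k+\epsilon$ with $\epsilon\in\{\pm1\}$ enters, and it is the whole point of selecting $k$ as a near-third of $b$: the matrix $\gamma$ has been engineered so that this single quadratic-in-$k$ expression becomes $\epsilon^2=1$. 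So the ``obstacle'' is really just recognizing this design, rather than any hard estimate.

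Finally, with the Gram matrix $\vect{3\alpha^2&\alpha^2\\\alpha^2&(\alpha^2+\beta^2)/3}$ in hand, \eqref{eqn:xycoords} yields $x=\alpha^2/(3\alpha^2)=1/3$ and
\[
	y=\sqrt{\frac{\alpha^2+\beta^2}{9\alpha^2}-\frac19}=\sqrt{\frac{\beta^2}{9\alpha^2}}=\frac{\beta}{3\alpha}=\frac{r_K^{1/3}}{3},
\]
where the last equality uses $\beta/\alpha=(m^\prime/m)^{1/3}=(a/b)^{1/3}=r_K^{1/3}$. This produces the claimed point $\frac13+i\frac{r_K^{1/3}}{3}$, completing the proof.
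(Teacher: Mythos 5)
Your proof is correct, but it organizes the computation differently from the paper. You conjugate the Gram matrix $B$ of $\{\nu^\perp,\beta^\perp\}$ from the preceding lemma, computing $\gamma B\gamma^{T}$ entry by entry via bilinearity, so the hypothesis $b=3k+\epsilon$ enters only at the end, through the quadratic identity $(b-3k)^2=1$ in the $\langle v_2,v_2\rangle$ entry. The paper instead simplifies the \emph{vectors} before taking any inner products: substituting $\nu^\perp=\frac{1}{3}(\pm\alpha^\perp+b\beta^\perp)$ gives $v_1=3\nu^\perp-b\beta^\perp=\pm\alpha^\perp$ and $v_2=\nu^\perp-k\beta^\perp=\frac{1}{3}\bigl(\pm\alpha^\perp+(b-3k)\beta^\perp\bigr)=\frac{1}{3}(\pm\alpha^\perp+\epsilon\beta^\perp)$, so the defining relation is used only linearly, and the whole Gram matrix then falls out of the orthogonality $\langle\alpha^\perp,\beta^\perp\rangle=0$ together with the known norms $3\alpha^2$ and $3\beta^2$ from Proposition~\ref{prop:shape_of_OK}. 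The paper's route is shorter and more structural: it exhibits $\mc{O}_K^\perp$ for Type II fields as the orthogonal frame $\{\alpha^\perp,\beta^\perp\}$ with the glue vector $\frac{1}{3}(\pm\alpha^\perp+\epsilon\beta^\perp)$ adjoined, which explains conceptually why the answer has $x$-coordinate $1/3$. Your route buys two things the paper leaves implicit: the explicit verification $\det\gamma=b-3k=\epsilon\in\{\pm1\}$, which is what guarantees $\{v_1,v_2\}$ really is a $\ZZ$-basis of $\mc{O}_K^\perp$, and the observation that the choice of $k$ is irrelevant to the first row of the Gram matrix and is forced only by the bottom-right entry. The final conversion to $\frac{1}{3}+i\frac{r_K^{1/3}}{3}$ via \eqref{eqn:xycoords} is identical in both arguments.
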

\begin{proof}
	We have that
	\[
		v_1=3\nu^\perp-b\beta^\perp=\pm\alpha^\perp+\alpha^{\perp2}-\alpha^{\perp2}=\pm\alpha^\perp
	\]
	and
	\[
		v_2=\nu^\perp-k\beta^\perp=\frac{\pm\alpha^\perp+(3k+\epsilon)\beta^\perp-3k\beta^\perp}{3}=\frac{1}{3}(\pm\alpha^\perp+\epsilon\beta^\perp).
	\]
	Thus,
	\begin{align*}
		\langle v_1,v_1\rangle=3\alpha^2,\quad\langle v_1,v_2\rangle=\frac{1}{3}\langle\alpha^\perp,\alpha^\perp\rangle=\alpha^2,\quad\\
		\langle v_2,v_2\rangle=\frac{1}{9}(\langle\alpha^\perp,\alpha^\perp\rangle+\langle\beta^\perp,\beta^\perp\rangle)=\frac{\alpha^2+\beta^2}{3}.
	\end{align*}
	The associated point in $\mf{H}$ has $x=\alpha^2/(3\alpha^2)=1/3$ and
	\[
		y=\sqrt{\frac{\alpha^2+\beta^2}{9\alpha^2}-\frac{1}{9}}=\frac{\beta}{3\alpha}=\frac{r_K^{1/3}}{3}.
	\]
\end{proof}

This concludes the proof of Theorem~\ref{thm:A}.

\subsection{Proof of Theorem~\ref{thm:B}}\label{sec:thumb} Let $L$ be any `non-pure' cubic field and let $z_L=x_L+iy_L$ be any point in the upper-half plane corresponding to the lattice $\mc{O}_L^\perp$. By studying the fields of definition of $x_L$ and $y_L$, we will briefly show that $z_L$ cannot be the shape of a pure cubic field. Since Theorem~\ref{thm:A} already shows that non-isomorphic pure cubic fields have different shapes, Theorem~\ref{thm:B} will follow.

First, we note that the key result from Theorem~\ref{thm:A} needed here is that the $y$-coordinate of the shape of a pure cubic field $K$ is \textit{not} rational, rather it lies `properly' in the image of $K$ in $\RR$. For this paragraph, we will call such a number `purely cubic'. Now, if $L$ is a real cubic field, then the inner product on $L\otimes_\QQ\RR$ is simply the trace form and so the Gram matrix of $\mc{O}_L^\perp$ has coefficients in $\QQ$. The equations in \eqref{eqn:xycoords} giving $z_L$ in terms of the Gram matrix show that $x_L\in\QQ$ and $y_L$ is in some quadratic extension of $\QQ$. Thus, $y_L$ is not purely cubic. For a complex cubic field $L$, let $\sigma_L$ denote the unique real embedding of $L$. By definition of the inner product on $L\otimes_\QQ\RR$ as recalled above, the entries of the Gram matrix of any basis of $\mc{O}_L^\perp$ lie in the Galois closure $N_L$ of $\sigma_L(L)$. The equations  in \eqref{eqn:xycoords} then show that $x_L\in N_L$ and that $y_L$ is in a quadratic extension $\wt{N}_L$ of $N_L$. Since cubic extensions have no non-trivial intermediate extensions, for any pure cubic field $K$, we have that $\wt{N}_L\cap K=\QQ$. Thus, again, $y_L$ is not purely cubic.

\section{Equidistribution of shapes}\label{sec:equid}
We have organized this section beginning with the most conceptual and ending with the most detailed. Specifically, \S\ref{sec:carefree} contains the real mathematical content: counting ``strongly carefree couples'' in a cone below a hyperbola, subject to congruence conditions. In \S\ref{sec:fieldcounting}, these results are translated into counts for pure cubic fields of bounded discriminant with ratio in a given interval. Finally, to begin with, \S\ref{sec:proofThmC} consists mostly of some basic measure theory in order to introduce and prove Theorem~\ref{thm:C} in a conceptually nicer way. 

\subsection{Proof of Theorem~\ref{thm:C}}\label{sec:proofThmC}
For $?=$ I or II, let $C_c(\mathscr{S}_?)$ denote the space of continuous functions on $\mathscr{S}_?$ with compact support, and for a positive integer $X$, define a positive linear functional $\phi_{?,X}$ on $C_c(\mathscr{S}_?)$ by
\[
	\phi_{?,X}(f)=\frac{1}{C_?\sqrt{X}}\sum_{\substack{K\text{ of type ?}\\ |\Delta(K)|\leq X}}f(\operatorname{sh}(K)),
\]
where the shape of $K$ is taken in $\mathscr{S}_?$. By the Riesz representation theorem, each $\phi_{?,X}$ corresponds to a (regular Radon) measure $\mu_{?,X}$ on $\mathscr{S}_?$. Note that since these measures are finite sums of point measures, we have that
\[
	\mu_{?,X}([R_1,R_2)_?)=\frac{\#\left\{K\text{ of type ?}:|\Delta(K)|\leq X,\operatorname{sh}(K)\in[R_1,R_2)_?\right\}}{C_?\sqrt{X}},
\]
for all $1\leq R_1<R_2$.
\begin{theorem}\label{thm:C'}
For $?=$ I or II, the sequence $\mu_{?,X}$ converges weakly to $\mu_?$. That is, for all $f\in C_c(\mathscr{S}_?)$,
\[
	\lim_{X\rightarrow\infty}\int fd\mu_{?,X}=\int fd\mu_?.
\]
\end{theorem}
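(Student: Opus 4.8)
The plan is to deduce the weak convergence of the $\mu_{?,X}$ from convergence on half-open intervals, which is exactly the content of Theorem~\ref{thm:C} and which I take as the analytic input supplied by the counting results of \S\ref{sec:carefree} and \S\ref{sec:fieldcounting}. Concretely, the field-counting asymptotics established there give, for every $1\leq R_1<R_2$,
\[
	\lim_{X\to\infty}\mu_{?,X}\big([R_1,R_2)_?\big)=\mu_?\big([R_1,R_2)_?\big),
\]
since the left-hand side is the normalized count $\#\{\dots\}/(C_?\sqrt{X})$ and the right-hand side is $\int_{[R_1,R_2)_?}d\mu_?$. The remaining task is the purely measure-theoretic step of passing from indicators of intervals to an arbitrary $f\in C_c(\mathscr{S}_?)$.

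To set this up I would first identify $\mathscr{S}_?$ with the half-line $[1,\infty)$ through the coordinate $t=r_K^{1/3}$ (so that $\operatorname{sh}(K)$ corresponds to $t$), under which $\mu_?$ becomes the measure $dt/t$ and the sets $[R_1,R_2)_?$ become the usual half-open intervals. Two features of the limit measure make the argument work: $\mu_?$ is non-atomic, so the endpoints of any interval carry no mass and half-open versus closed is immaterial in the limit; and $\mu_?$ is finite on compact sets, with $\mu_?([1,R])=\log R<\infty$.

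The core is a sandwiching argument. Given $f\in C_c(\mathscr{S}_?)$ with support inside some $[1,R]_?$ and given $\varepsilon>0$, uniform continuity of $f$ lets me pick a partition $1=t_0<t_1<\cdots<t_N=R$ so fine that the oscillation of $f$ on each $[t_{j-1},t_j)$ is at most $\varepsilon$. Setting $s^-=\sum_j(\inf_{[t_{j-1},t_j)}f)\,\mathbf{1}_{[t_{j-1},t_j)}$ and defining $s^+$ with the suprema gives step functions with $s^-\leq f\leq s^+$ and $s^+-s^-\leq\varepsilon$. Each $s^\pm$ is a finite $\RR$-linear combination of indicators of half-open intervals, so the interval convergence above yields $\int s^\pm\,d\mu_{?,X}\to\int s^\pm\,d\mu_?$ as $X\to\infty$. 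Since every $\mu_{?,X}$ is a positive measure, $\int s^-\,d\mu_{?,X}\leq\int f\,d\mu_{?,X}\leq\int s^+\,d\mu_{?,X}$, while $\int(s^+-s^-)\,d\mu_?\leq\varepsilon\,\mu_?([1,R])=\varepsilon\log R$. Letting $X\to\infty$ and then $\varepsilon\to0$ forces $\int f\,d\mu_{?,X}\to\int f\,d\mu_?$, which is the asserted weak convergence.

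The genuine difficulty in Theorem~\ref{thm:C} lies not in this final packaging but in establishing the interval convergence itself, namely the strongly-carefree-couple counts of \S\ref{sec:carefree} with their congruence conditions and the field asymptotics of \S\ref{sec:fieldcounting} that they produce. Within the present argument the only points requiring care are that $f$ has compact support, so that only finitely many intervals of the partition are relevant and the error term $\varepsilon\log R$ stays finite, and that the interval convergence is invoked only for the finitely many subintervals produced for each fixed $\varepsilon$, so that no uniformity in the partition is needed.
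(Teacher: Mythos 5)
Your proposal is correct and takes essentially the same approach as the paper: both reduce the weak convergence to the interval convergence $\mu_{?,X}([R_1,R_2)_?)\to\mu_?([R_1,R_2)_?)$ supplied by Theorem~\ref{thm:fieldasymptotics}, and then sandwich $f\in C_c(\mathscr{S}_?)$ between step functions built from indicators of such intervals, passing to $\liminf$ and $\limsup$ and letting the approximation error tend to zero. The only difference is cosmetic: you construct the approximating step functions explicitly via uniform continuity on the compact support, while the paper simply asserts their existence as part of the ``standard argument.''
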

\begin{proof}
	The first step will be to prove the statement given in Theorem~\ref{thm:C} in the introduction, i.e.\ we must obtain asymptotics for the number of pure cubic fields of bounded discriminant and ratio in a given interval. This is the subject of Theorem~\ref{thm:fieldasymptotics} below. We obtain, in the notation of that theorem,
	\[
		\mu_{?,X}([R_1,R_2)_?)=\frac{\mc{N}_?(X,R_1^3,R_2^3)}{C_?\sqrt{X}}=\log\left(\frac{R_2}{R_1}\right)+o(1).
	\]
	Thus, $\mu_{?,X}([R_1,R_2)_?)\rightarrow\mu_?([R_1,R_2)_?)$ as desired. Passing from this result to the full result is a standard argument. Indeed, let $f\in C_c(\mathscr{S}_?)$ be given and let $\epsilon>0$ be arbitrary. Then, there exists a pair $(f_1,f_2)$ of step functions (i.e.\ finite linear combinations of characteristic functions of intervals of the form $[R_1,R_2)_?$ above) with $f_1\leq f\leq f_2$ and $\int(f_2-f_1)d\mu_?<\epsilon$. Since we already know the theorem for such step functions, we obtain that
	\begin{align*}
		\int fd\mu_?-\epsilon&\leq\int f_1d\mu_?=\lim_{X\rightarrow\infty}\int f_1d\mu_{?,X}\\
		&\leq\liminf_{X\rightarrow\infty}\int fd\mu_{?,X}\\
		&\leq\limsup_{X\rightarrow\infty}\int fd\mu_{?,X}\\
		&\leq\lim_{X\rightarrow\infty}\int f_2d\mu_{?,X}=\int f_2d\mu_?\\
		&\leq\int fd\mu_?+\epsilon.
	\end{align*}
	As $\epsilon$ is arbitrary, the result follows.
\end{proof}
%Recall the subsets $\mathscr{S}_{\mathrm{I}}$ and $\mathscr{S}_{\mathrm{II}}$ of $\mf{H}$ from the introduction.
%Version 1
%We use \cite[\S\S1.4,1.5]{Folland} as a basic reference for this section.
%
%For $?=$ I or II, let $\mathscr{A}_?$ denote the algebra of sets in $\mathscr{S}_?$ consisting of finite disjoint unions of intervals of the form $[R_1, R_2)_?$ or $[R,\infty)_?$. For $A\in\mathscr{A}_?$, let
%\[
%	\wt{\mu}_?(A)=\lim_{X\rightarrow\infty}\frac{\#\left\{K:|\Delta(K)|\leq X,\operatorname{sh}(K)\in A\right\}}{C_?\sqrt{X}}.
%\]
%It is clear that this is a premeasure. Since $\wt{\mu}_?$ is $\sigma$-finite, there is a unique Borel measure $\mu_?$ on $\mathscr{S}_?$ extending $\wt{\mu}_?$. We may now state our strengthening of Theorem~\ref{thm:C}.
%\begin{theorem}\label{thm:C'}
%For $?=$ I or II,
%\[
%	\mu_?=\frac{dy}{y}.
%\]
%\end{theorem}
%\begin{proof}
%	By Theorem~\ref{thm:fieldasymptotics} below, the measures $\mu_?$ and $\frac{dy}{y}$ agree for any finite interval $[R_1,R_2)_?$; indeed, they both give measure $\log(R_2/R_1)$. Furthermore, since the number of pure cubic fields of Type ? and discriminant bounded by $X$ grows like $\sqrt{X}\log(X)$ (see \cite[Theorem~8]{Manjul-Ari}), we also see that $\mu_?([R,\infty))=\infty$, in agreement with the measure $\frac{dy}{y}$. As such, the measures agree on all of $\mathscr{A}_?$. We obtain the desired result by the uniqueness of the extension of $\wt{\mu}_?$.
%\end{proof}

\subsection{Counting pure cubic fields of bounded discriminant and ratio}\label{sec:fieldcounting}
For real numbers $X, R_1,R_2\geq1$ with $R_1<R_2$, let $\N_?(X,R_1,R_2)$ denote the number of pure cubic fields (up to isomorphism) of type $?=$~I or II, of discriminant less than $X$ (in absolute value), and ratio in the interval $(R_1,R_2)$. When we drop the subscript, we count Type I and II together.

\begin{theorem}\label{thm:fieldasymptotics}
	For fixed $R_1$ and $R_2$, we have the following asymptotics:
	\begin{align}
		\N_{\mathrm{I}}(X,R_1,R_2)&=\frac{2C}{15\sqrt{3}}\sqrt{X}\log\frac{R_2}{R_1}+o(\sqrt{X}),\\
		\N_{\mathrm{II}}(X,R_1,R_2)&=\frac{C}{10\sqrt{3}}\sqrt{X}\log\frac{R_2}{R_1}+o(\sqrt{X}),
\end{align}
\begin{align}
		\N(X,R_1,R_2)&=\frac{7C}{30\sqrt{3}}\sqrt{X}\log\frac{R_2}{R_1}+o(\sqrt{X}),
	\end{align}
	where $C$ is as in Theorem~\ref{thm:C}.
\end{theorem}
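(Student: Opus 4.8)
The plan is to convert the field count into a count of \textbf{strongly carefree couples} in a planar region and then feed it into the counting estimate of \S\ref{sec:carefree}. Recall from \S\ref{sec:int_bases} that pure cubic fields are in bijection with pairs $(a,b)$ of coprime squarefree positive integers with $a>b$ and $ab\geq2$, the field being $\QQ((ab^2)^{1/3})$ with ratio $r_K=a/b$. By Lemma~\ref{lem:intbasis}, a Type~I field has $|\Delta(K)|=3^3a^2b^2$ and a Type~II field has $|\Delta(K)|=3a^2b^2$, so the bound $|\Delta(K)|\leq X$ becomes $ab\leq\sqrt{X/27}$ for Type~I and $ab\leq\sqrt{X/3}$ for Type~II, while the ratio condition is simply $R_1<a/b<R_2$ (which forces $a>b$ as $R_1\geq1$). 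First, then, I would record that $\N_?(X,R_1,R_2)$ counts strongly carefree couples $(a,b)$ in the region
\[
	\mc{R}_T=\{(a,b)\in\RR_{>0}^2:ab\leq T,\ R_1<a/b<R_2\}
\]
with threshold $T=T_?$, subject to the congruence conditions $\mod{9}$ that cut out the type: Type~II means $3\nmid ab$ together with $ab^2\equiv\pm1\mod{9}$, and Type~I is the complement.

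The second step is geometric and analytic. The change of variables $(a,b)\mapsto(u,r)=(ab,a/b)$ has Jacobian $\tfrac{1}{2r}$, so
\[
	\operatorname{area}(\mc{R}_T)=\int_{R_1}^{R_2}\!\!\int_0^T\frac{du\,dr}{2r}=\frac{T}{2}\log\frac{R_2}{R_1}.
\]
The main input, to be proved in \S\ref{sec:carefree}, is that the number of strongly carefree couples in $\mc{R}_T$ whose reduction $(a\bmod9,\,b\bmod9)$ lies in a prescribed set is asymptotic to $\big(\prod_p\delta_p\big)\cdot\tfrac{T}{2}\log(R_2/R_1)+o(T)$, where $\delta_p=1-\tfrac{3}{p^2}+\tfrac{2}{p^3}$ for $p\neq3$ and the factor at $3$ is the proportion of the allowed residues $\mod{9}$ that are also strongly carefree there. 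Since $T_?\asymp\sqrt{X}$, the error $o(T)$ is $o(\sqrt{X})$, as required.

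It then remains to compute the local factor at $3$ for each type. A direct count $\mod{9}$ shows that exactly $60$ of the $81$ residue pairs are strongly carefree at $3$ (that is, $a,b\not\equiv0\mod{9}$ and not both divisible by $3$), recovering $20/27=60/81$ for the whole family, consistent with $C=\tfrac{20}{27}\prod_{p\neq3}(1-\tfrac{3}{p^2}+\tfrac{2}{p^3})$. For Type~II one restricts to units $a,b\in(\ZZ/9)^\times$; identifying $(\ZZ/9)^\times\cong\ZZ/6$ via the generator $2$ and tracking $ab^2$, exactly $12$ of these $36$ unit pairs satisfy $ab^2\equiv\pm1\mod{9}$, giving the factor $12/81$; the Type~I factor is the remaining $48/81$. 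Writing $48/81=\tfrac45\cdot\tfrac{20}{27}$ and $12/81=\tfrac15\cdot\tfrac{20}{27}$, the overall couple densities become $\tfrac45C$ and $\tfrac15C$. Substituting $T_{\mathrm I}=\sqrt{X/27}$ and $T_{\mathrm{II}}=\sqrt{X/3}$ into the area formula yields
\[
	\N_{\mathrm I}\sim\frac45C\cdot\frac{\sqrt{X/27}}{2}\log\frac{R_2}{R_1}=\frac{2C}{15\sqrt3}\sqrt{X}\log\frac{R_2}{R_1},
\]
\[
	\N_{\mathrm{II}}\sim\frac15C\cdot\frac{\sqrt{X/3}}{2}\log\frac{R_2}{R_1}=\frac{C}{10\sqrt3}\sqrt{X}\log\frac{R_2}{R_1},
\]
and summing gives the total constant $\tfrac{2}{15\sqrt3}+\tfrac{1}{10\sqrt3}=\tfrac{7}{30\sqrt3}$.

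The main obstacle is the counting estimate invoked in the second step: establishing, with an honest $o(T)$ (ideally power-saving) error, the asymptotic number of strongly carefree couples in the truncated cone $\mc{R}_T$ under congruence constraints. Since $R_1,R_2$ are fixed, $\mc{R}_T$ is a genuinely two-dimensional region of diameter $\Theta(\sqrt{T})$, so a Möbius inversion removing the squarefreeness and coprimality, combined with the hyperbola method to count inside $\mc{R}_T$, should deliver the required error; the care lies in making it uniform across the two Möbius variables and in the $\mod{9}$ bookkeeping. The remaining arithmetic---in particular the clean density $12/81$ for Type~II coming from the cyclic structure of $(\ZZ/9)^\times$---is routine once this estimate is in hand.
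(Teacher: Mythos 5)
Your proposal is correct and follows essentially the same route as the paper: reduce $\N_?(X,R_1,R_2)$ to counts of strongly carefree couples under the hyperbola $ab\leq T_?$ (with $T_{\mathrm{I}}=\sqrt{X/27}$, $T_{\mathrm{II}}=\sqrt{X/3}$) in a cone with congruence conditions modulo $9$, and defer the asymptotic count to \S\ref{sec:carefree} --- exactly the role played there by Theorem~\ref{prop:TypeIIabcount}, whose three statements are precisely the instances of your ``density times area'' input, with the coefficients $\tfrac45 C$, $\tfrac15 C$, $C$ matching your residue count at $3$. The only cosmetic differences are that the paper works with the symmetric cone $\tfrac1R\leq a/b\leq R$, a factor $\tfrac12$, and differencing at $R_2$ and $R_1$, and that the constants $\tfrac45$, $\tfrac15$ emerge there from the Euler factor $\frac{p^2}{(p-1)(p+2)}\big|_{p=3}=\tfrac{9}{10}$ in Lemma~\ref{lem:regionsi-iii} (with $n=9$, summed over the two residue functions $b\equiv\pm a$ modulo $9$) rather than from your direct count of $48$ and $12$ admissible residue pairs out of $81$.
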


\begin{proof}[Proof of Theorem~\ref{thm:fieldasymptotics}]
	For Type~I fields, the discriminant is $-3^3a^2b^2$, so, in the notation of Theorem~\ref{prop:TypeIIabcount} below, we take $N=\frac{1}{3}\sqrt{X/3}$ and get that
	\[
		\N_{\mathrm{I}}(X,R_1,R_2)=\frac{1}{2}\left(\mc{S}_{\mathrm{I}}\left(\frac{1}{3}\sqrt{X/3}, R_2\right)-\mc{S}_{\mathrm{II}}\left(\frac{1}{3}\sqrt{X/3}, R_1\right)\right).
	\]
	The desired result follows from Theorem~\ref{prop:TypeIIabcount}.
	
	Similarly, for Type~II fields, the discriminant is $-3a^2b^2$, so $N=\sqrt{X/3}$, and we have that
	\[
		\N_{\mathrm{II}}(X,R_1,R_2)=\frac{1}{2}\left(\mc{S}_{\mathrm{II}}(\sqrt{X/3}, R_2)-\mc{S}_{\mathrm{II}}(\sqrt{X/3}, R_1)\right),
	\]
	which gives the desired result by Theorem~\ref{prop:TypeIIabcount}.
\end{proof}

The next section contains all the counting results used in the above proof.

\subsection{Counting strongly carefree couples in a hyperbolic pie slice, with congruence conditions}\label{sec:carefree}
A pair $(a,b)$ of positive integers is called a \emph{strongly carefree couple} if $a$ and $b$ are relatively prime and squarefree. In \cite{Carefree}, Pieter Moree counts the number of strongly carefree couples in a box of side $N$ obtaining the asymptotic $CN^2+O(N^{3/2})$, where $C$ is as in Theorem~\ref{thm:C}. Counting pure cubic fields of bounded discriminant and ratio amounts to counting strongly carefree couples below the hyperbola $ab=N$ and within the cone $R^{-1}\leq a/b\leq R$, while counting only those of Type~II imposes a congruence condition modulo 9. In this section, we determine asymptotics for these counts following the methods of \cite{Carefree} and \cite[\S6.2]{Manjul-Ari}.

For $N,R\geq1$, let
\[
	\mc{S}(N,R)=\#\left\{(a,b)\in\ZZ_{\geq1}^2:(a,b)\text{ strongly carefree}, ab\leq N, \frac{1}{R}\leq\frac{a}{b}\leq R\right\},
\]
\[
	\mc{S}_{\mathrm{II}}(N,R)=\#\left\{(a,b)\in\mc{S}(N,R):3\nmid ab, a^2\equiv b^2\mod{9}\right\},
\]
and
\[
	\mc{S}_{\mathrm{I}}(N,R)=\mc{S}(N,R)-\mc{S}_{\mathrm{II}}(N,R).
\]

\begin{theorem}\label{prop:TypeIIabcount}
	For fixed $R$, we have
	\begin{align}
		\mc{S}_{\mathrm{I}}(N,R)&=\frac{4C}{5}N\log R+o(N),\\
		\mc{S}_{\mathrm{II}}(N,R)&=\frac{C}{5}N\log R+o(N),\\
		\mc{S}(N,R)&=CN\log R+o(N).
	\end{align}
\end{theorem}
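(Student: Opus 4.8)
The plan is to prove the three asymptotics in the order $\mc{S}$, then $\mc{S}_{\mathrm{II}}$, and finally $\mc{S}_{\mathrm{I}}=\mc{S}-\mc{S}_{\mathrm{II}}$ by subtraction. Each is an instance of the principle that the number of strongly carefree couples (possibly subject to a congruence condition at $3$) inside a region equals the relevant density times the area of the region, up to an error $o(N)$. I would isolate the geometry first. Writing $\mc{R}=\mc{R}(N,R)$ for the planar region $ab\le N$, $1/R\le a/b\le R$ in the open first quadrant, the substitution $u=ab$, $v=a/b$ (with $da\,db=\tfrac{1}{2v}\,du\,dv$) gives $\operatorname{area}(\mc{R})=N\log R$. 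The same computation shows that for any $\lambda>0$ and $M>0$ the skewed region $\{ab\le M,\ \lambda/R\le a/b\le\lambda R\}$ has area exactly $M\log R$, depending only on $M$ and the angular width $R^2$ and not on $\lambda$; this skew-invariance is what makes the main term independent of the sieve parameters.

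Next I would sieve. Using $\mu^2(a)\mu^2(b)[\gcd(a,b)=1]=\sum_{d_1^2\mid a}\sum_{d_2^2\mid b}\sum_{e\mid\gcd(a,b)}\mu(d_1)\mu(d_2)\mu(e)$ and setting $L_1=\operatorname{lcm}(d_1^2,e)$, $L_2=\operatorname{lcm}(d_2^2,e)$, one obtains
\[
	\mc{S}(N,R)=\sum_{d_1,d_2,e}\mu(d_1)\mu(d_2)\mu(e)\,A(L_1,L_2),\quad A(L_1,L_2):=\#\{(a,b)\in\mc{R}:L_1\mid a,\ L_2\mid b\}.
\]
The scaling $a=L_1a'$, $b=L_2b'$ identifies $A(L_1,L_2)$ with a lattice-point count for the skewed region with $M=N/(L_1L_2)$ and $\lambda=L_2/L_1$, so $A(L_1,L_2)=\frac{N\log R}{L_1L_2}+(\text{error})$, the main term being the skew-invariant area just computed. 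Summing the main terms and collecting the local contributions from $d_1,d_2,e\in\{1,p\}$ gives the Euler factor $(1-p^{-2})^2-(p^{-1}-p^{-2})^2=1-3p^{-2}+2p^{-3}$ at each $p$, whence $\sum_{d_1,d_2,e}\frac{\mu(d_1)\mu(d_2)\mu(e)}{L_1L_2}=C$. As this product converges absolutely, and as $A(L_1,L_2)=0$ whenever $L_1L_2>N$ (since $a\ge L_1$, $b\ge L_2$ force $ab\ge L_1L_2$), restoring the omitted tail costs only $o(1)$, and the main term is $CN\log R+o(N)$.

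The technical heart, and the step I expect to be the main obstacle, is bounding the total error by $o(N)$. Counting a skewed region by rows and rounding yields $A(L_1,L_2)=\frac{N\log R}{L_1L_2}+O\!\big(\tfrac{\sqrt{NR}}{L_1}+\tfrac{\sqrt{NR}}{L_2}+1\big)$, and naively this looks dangerous since very skewed regions have large boundary relative to area. The saving observation is that every $(a,b)\in\mc{R}$ satisfies $a,b\le\sqrt{NR}$, hence $L_1,L_2\le\sqrt{NR}$; with \emph{both} moduli capped the relevant sums become box-type rather than hyperbola-type. Writing $Y=\sqrt{NR}$, one finds $\sum_{L_1,L_2\le Y}1=O(Y\log Y)$ and $\sum_{L_1,L_2\le Y}L_1^{-1}=O(\sqrt{Y})$ (both evaluated through the shared parameter $e$, exactly as in \cite{Carefree}), so the error sum is $O\big(Y\cdot\sqrt{Y}+Y\log Y\big)=O\big((NR)^{3/4}\big)=o(N)$ for fixed $R$. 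This establishes the third asymptotic, following \cite{Carefree} and \cite[\S6.2]{Manjul-Ari}.

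Finally, for $\mc{S}_{\mathrm{II}}$ I would impose the extra conditions $3\nmid ab$ and $a^2\equiv b^2\pmod 9$. For units modulo $9$ one has $a^2\equiv b^2\iff a\equiv\pm b\pmod 9$, so these conditions pin $(a,b)$ to $12$ of the $81$ residue classes modulo $9$ and involve only the prime $3$. The entire argument above then runs verbatim with the sieve taken over primes $p\ne3$ and the area estimate performed inside each admissible class modulo $9$ (the extra congruence merely refines the boundary contribution, still $o(N)$). The upshot is that the Euler factor at $3$, namely $1-\tfrac39+\tfrac2{27}=\tfrac{20}{27}$, is replaced by the Type II local density $\tfrac{12}{81}=\tfrac{4}{27}$; the ratio $\tfrac{4/27}{20/27}=\tfrac15$ yields $\mc{S}_{\mathrm{II}}(N,R)=\tfrac{C}{5}N\log R+o(N)$. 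Subtracting gives $\mc{S}_{\mathrm{I}}(N,R)=\mc{S}(N,R)-\mc{S}_{\mathrm{II}}(N,R)=\tfrac{4C}{5}N\log R+o(N)$, completing all three asymptotics.
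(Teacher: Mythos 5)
Your proposal is correct, but it follows a genuinely different route from the paper's. The paper never sieves in two dimensions: it decomposes the hyperbolic pie slice by inclusion--exclusion into four regions of the form $a\leq\sqrt{\rho N}$, $b\leq N/a$ or $b\leq\rho a$, evaluates the inner sum over $b$ by a lemma counting squarefree integers coprime to $a$ in a fixed residue class mod $n$ (proved with Dirichlet characters), and then evaluates the outer sum over $a$ by Perron's formula, shifting the contour past the pole of $\zeta$; the factor $\log R$ emerges only after the $R$-independent terms $\tfrac12\log N+\gamma+3\kappa+\sum_{p\mid n}\tfrac{\log p}{p+2}$ cancel between regions (i) and (ii). The Type II condition is handled there by running the same machinery with $n=9$ and summing over the two functions $\psi(a)=\pm a$, giving $2\cdot\tfrac{C}{9}\cdot\tfrac{9}{10}=\tfrac{C}{5}$, which is your local-density ratio $\tfrac{4/27}{20/27}=\tfrac15$ packaged analytically rather than geometrically. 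Your approach---a single two-dimensional M\"obius sieve whose main term is (Euler product of local densities) times (hyperbolic area), with the skew-invariance of the area under $(a,b)\mapsto(L_1a',L_2b')$ doing the work that the paper's four-region cancellation does---is more elementary (no characters, no contour integration) and makes it conceptually evident why the answer is exactly $CN\log R$; your identification of the Euler factor as $(1-p^{-2})^2-(p^{-1}-p^{-2})^2$ and the Type II density as $12/81$ are both correct, as are the two error-sum estimates $\sum 1=O(Y\log Y)$ and $\sum L_1^{-1}=O(\sqrt{Y})$ over triples with $L_1,L_2\leq Y$ (using that $e$ squarefree gives $L_i=d_i^2e/\gcd(d_i,e)$, so $d_i\leq\sqrt{Y}$, $e\leq Y$), and you correctly spotted that capping \emph{both} moduli at $\sqrt{NR}$ is what saves the boundary terms. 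What the paper's heavier machinery buys in exchange is reusable, sharper information: its lemmas produce the secondary constants explicitly and give a uniform $O(N^{3/4+\epsilon})$ error for every congruence modulus $n$ at once, whereas in your argument each new congruence condition requires redoing the residue-class geometry---though for the present theorem the two methods land on errors of the same quality, $O((NR)^{3/4+\epsilon})$.
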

\begin{proof}
	For the two latter counts, we cover the hyperbolic pie slice with the following four pieces:
	\begin{itemize}
		\item[(i)] $1\leq a\leq\sqrt{RN}, 1\leq b\leq\frac{N}{a}$,
		\item[(ii)] $1\leq a\leq\sqrt{\frac{N}{R}}, 1\leq b\leq\frac{N}{a}$,
		\item[(iii)] $1\leq a\leq\sqrt{\frac{N}{R}}, 1\leq b\leq Ra$,
		\item[(iv)] $1\leq a\leq\sqrt{RN}, 1\leq b\leq\frac{1}{R}a$.
	\end{itemize}
	Letting $\mc{S}^?(N,R)$ denote the number of points with condition $?\in\{\text{(i), (ii), (iii), (iv)}\}$ imposed, and similarly for $\mc{S}^?_{\mathrm{II}}(N,R)$, we have that
	\[
		\mc{S}_?(N,R)=\mc{S}_?^{(i)}(N,R)-\mc{S}_?^{(ii)}(N,R)+\mc{S}_?^{(iii)}(N,R)-\mc{S}_?^{(iv)}(N,R),
	\]
	for $?=\mathrm{II}$ or nothing. For $\mc{S}(N,R)$, we will apply Lemmas~\ref{lem:regionsi-iii} and \ref{lem:regionsii-iv} with $n=1$, whereas for $\mc{S}_{\mathrm{II}}(N,R)$ we take $n=9$ and sum over two $\psi$s, one sending $a$ to $a\mod{9}$, the other to $-a\mod{9}$. Note that all but one of the terms in Lemma~\ref{lem:regionsi-iii} cancel in $\mc{S}_?^{(i)}(N,R)-\mc{S}_?^{(ii)}(N,R)$ since they are independent of $R$. Applying Lemma~\ref{lem:regionsi-iii} with $\rho=R$, then $\rho=1/R$ and noting that Lemma~\ref{lem:regionsii-iv} implies the contribution of $\mc{S}_?^{(iii)}(N,R)-\mc{S}_?^{(iv)}(N,R)$ is negligible, we obtain the stated results.
\end{proof}

The point counting in regions of the form (i)--(iv) above are contained in the following two lemmas.
\begin{lemma}\label{lem:regionsi-iii}
Let $n$ be a positive integer, let $\psi$ be any function from the positive integers prime to $n$ to $(\ZZ/n\ZZ)^\times$, and let $\rho$ be a positive real number. Then,
\begin{align*}
	\sum_{\substack{a\leq\sqrt{\rho N}\\ (a,n)=1}}\mu^2(a)\!\!\!\!\!\sum_{\substack{b\leq N/a\\ (a,b)=1\\ b\equiv\psi(a)\mod{n}}}\!\!\!\!\!\mu^2(b)&=\frac{CN}{n}\prod_{p\mid n}\left(\frac{p^2}{(p-1)(p+2)}\right)\\
	&\cdot\left(\frac{\log N}{2}+\frac{\log \rho}{2}+\gamma+3\kappa+\sum_{p\mid n}\frac{\log p}{p+2}\right)+O(N^{3/4+\epsilon})
\end{align*}
for all $\epsilon>0$,where $\gamma$ is the Euler--Mascheroni constant, $C$ is as in Theorem~\ref{thm:C}, and
\[
	\kappa=\sum_p\frac{\log(p)}{p^2+p-2}.
\]
\end{lemma}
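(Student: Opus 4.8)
Lemma \ref{lem:regionsi-iii} — proof proposal.

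The plan is to evaluate the double sum over squarefree $a,b$ with the coprimality and congruence constraints by summing over $a$ on the outside and handling the inner sum over $b$ asymptotically. First I would dispatch the inner sum: for fixed $a \le \sqrt{\rho N}$ coprime to $n$, I need a count of squarefree $b \le N/a$ that are coprime to $a$ and lie in a fixed residue class $\psi(a) \bmod n$. The standard tool is
\[
	\sum_{\substack{b\le Y \\ (b,a)=1 \\ b\equiv c \bmod n}} \mu^2(b) = \frac{Y}{\zeta(2)}\cdot\frac{1}{n}\prod_{p\mid n}\left(1-\frac{1}{p^2}\right)^{-1}\prod_{p\mid a}\left(1+\frac 1p\right)^{-1} + O\!\left(\sqrt{Y}\,d(a)\cdots\right),
\]
obtained by writing $\mu^2(b)=\sum_{d^2\mid b}\mu(d)$, swapping order, and counting $b$ in an arithmetic progression modulo $\operatorname{lcm}(d^2,n)$ restricted by the coprimality-to-$a$ condition. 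Because $(a,n)=1$, the congruence $b\equiv\psi(a)$ is independent of the coprimality-to-$a$ Möbius sieve, so the local factors at $p\mid n$ and the local factors at $p\mid a$ multiply cleanly; this is where the factor $\tfrac 1n\prod_{p\mid n}(1-p^{-2})^{-1}$ enters. I would keep careful track of the error term, which will be $O(\sqrt{N/a}\,2^{\omega(a)})$ or similar, so that after summing over $a$ it contributes $O(N^{3/4+\epsilon})$.

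Next I would substitute the main term back and sum over squarefree $a\le\sqrt{\rho N}$ coprime to $n$. The resulting main term is
\[
	\frac{N}{\zeta(2)}\cdot\frac 1n\prod_{p\mid n}\left(1-\frac{1}{p^2}\right)^{-1}\sum_{\substack{a\le\sqrt{\rho N}\\(a,n)=1}}\frac{\mu^2(a)}{a}\prod_{p\mid a}\left(1+\frac 1p\right)^{-1}.
\]
The key analytic input is the asymptotic for this weighted reciprocal sum. I would compute it via its Dirichlet series: the arithmetic function $a\mapsto \mu^2(a)\prod_{p\mid a}(1+1/p)^{-1}$ restricted to $(a,n)=1$ has an Euler product that agrees with $\zeta(s)$ times a product convergent for $\Re(s)>1/2$, so the partial sum of $f(a)/a$ up to $T$ behaves like $A(\log T + B) + O(T^{-1/2+\epsilon})$ for explicit constants $A,B$. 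Here $T=\sqrt{\rho N}$, so $\log T = \tfrac12(\log\rho+\log N)$, which produces exactly the $\tfrac{\log N}{2}+\tfrac{\log\rho}{2}$ shape in the claim. The constant $A$, when combined with $N/\zeta(2)$ and the $n$-local factors, must collapse to the advertised $\frac{CN}{n}\prod_{p\mid n}\frac{p^2}{(p-1)(p+2)}$; I would verify this by matching Euler products factor-by-factor, checking that the $p\nmid n$ factors assemble into $C=\prod_p(1-3p^{-2}+2p^{-3})$ and the $p\mid n$ factors give the stated correction. The additive constant $B$ must be shown to equal $\gamma + 3\kappa + \sum_{p\mid n}\frac{\log p}{p+2}$; extracting $B$ amounts to logarithmically differentiating the Euler product at $s=1$, which is the source of $\gamma$ (from $\zeta$), the $\kappa=\sum_p\frac{\log p}{p^2+p-2}$ term (from the correction Euler product, whose logarithmic derivative at each prime gives $\frac{\log p}{p^2+p-2}$, with the factor $3$ presumably emerging from the combined contributions of the three Euler-product pieces of $C$), and the $n$-local terms $\frac{\log p}{p+2}$.

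The main obstacle I anticipate is the exact bookkeeping of the constants $A$ and especially $B$: matching my Euler-product constant against the clean closed forms $C$, $\gamma+3\kappa$, and the $n$-dependent pieces requires care, since a single misplaced local factor silently corrupts $B$. In particular the coefficient $3$ multiplying $\kappa$ is delicate — it should be traceable to writing $1-3p^{-2}+2p^{-3}=(1-p^{-1})^2(1+2p^{-1})$ or a comparable factorization and reading off its logarithmic derivative. I would organize this by defining $g(a)=\mu^2(a)\prod_{p\mid a}(1+1/p)^{-1}$, isolating $\sum_{a\le T,(a,n)=1} g(a)/a$ via a Perron/Dirichlet-hyperbola or elementary convolution argument (convolving $g/\mathrm{id}$ against $1/\mathrm{id}$ and invoking $\sum_{a\le T}1/a=\log T+\gamma+O(1/T)$), and only at the end normalizing everything to the stated form. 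The error term $O(N^{3/4+\epsilon})$ should fall out uniformly, dominated by the squarefree-counting error propagated through the $a$-sum.
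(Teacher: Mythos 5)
Your proposal is correct and follows essentially the same route as the paper: your inner-sum estimate is precisely the paper's Lemma~\ref{lem:bsum} (whose main term is likewise independent of the residue class), and your Dirichlet-series step---writing the generating function as $\zeta$ times a product converging past the abscissa and extracting $A(\log T+B)$ with $T=\sqrt{\rho N}$ via Perron---is exactly the paper's Lemma~\ref{lem:perron} with $h(s)=f_0(s)/\zeta(s+1)$, down to the same error bookkeeping $O(N^{3/4+\epsilon})$ from the $2^{\omega(a)}\sqrt{N/a}$ terms. One warning on the single point you left heuristic: the factor $3$ in $3\kappa$ does \emph{not} come from the ``three pieces'' of $(1-p^{-1})^2(1+2p^{-1})$---deforming that factorization to $(1-p^{-s-1})^2(1+2p^{-s-1})$ and log-differentiating at $s=0$ gives $6\kappa$---but rather from $\left.\tfrac{d}{ds}\left(p^{-s}+p^{-2s}\right)\right|_{s=0}=-3\log p$ in the Euler factors $1-\tfrac{p^{-s}+p^{-2s}}{p(p+1)}$ of $h$, equivalently from $\tfrac{1}{p-1}-\tfrac{1}{p+2}=\tfrac{3}{(p-1)(p+2)}$.
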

\begin{proof}
	Applying Lemma~\ref{lem:bsum} below to the inner sum with $a^\prime=\psi(a)$ yields
	\[
		\sum_{\substack{b\leq N/a\\ (a,b)=1\\ b\equiv\psi(a)\mod{n}}}\mu^2(b)=\frac{\phi(a)N}{a^2n}\cdot\frac{1}{\zeta(2)}\cdot\prod_{p\mid an}\frac{p^2}{p^2-1}+O(2^{\omega(a)}\sqrt{N/a}).
	\]
	Estimating the error using that\footnote{Indeed, $2^{\omega(m)}\leq d(m)$ and $d(m)=o(m^{\epsilon})$ for all $\epsilon>0$ (see e.g. \cite[Exercise~13.13]{Apostol}).}
	\[
		\sum_{m\leq x}\mu^2(m)\frac{2^{\omega(m)}}{\sqrt{m}}=O(x^{1/2+\epsilon}),\text{ for all }\epsilon>0,
	\]
	we obtain, for all $\epsilon>0$, and up to $O(N^{3/4+\epsilon})$,
	\begin{align*}
		\sum_{\substack{a\leq\sqrt{\rho N}\\ (a,n)=1}}\mu^2(a)\!\!\!\!\!\sum_{\substack{b\leq N/a\\ (a,b)=1\\ b\equiv\psi(a)\mod{n}}}\!\!\!\!\!\mu^2(b)&=\frac{N}{n\zeta(2)}\prod_{p\mid n}\frac{p^2}{p^2-1}\sum_{\substack{a\leq\sqrt{\rho N}\\ (a,n)=1}}\mu^2(a)\frac{\phi(a)}{a^2}\prod_{p\mid a}\frac{p^2}{p^2-1}\\
		&=\frac{N}{n\zeta(2)}\prod_{p\mid n}\frac{p^2}{p^2-1}\sum_{\substack{a\leq\sqrt{\rho N}\\ (a,n)=1}}\mu^2(a)\prod_{p\mid a}\frac{1}{p+1}.
	\end{align*}
	Applying Perron's formula as in Lemma~\ref{lem:perron} below with $k=0$ and $x=\sqrt{\rho N}$ yields the desired result.
\end{proof}

\begin{lemma}\label{lem:regionsii-iv}
	Let $n,\psi$, and $\rho$ be as in Lemma~\ref{lem:regionsi-iii}. Then.
	\[
		\sum_{\substack{a\leq\sqrt{\rho N}\\ (a,n)=1}}\mu^2(a)\sum_{\substack{b\leq a/\rho\\ (a,b)=1\\ b\equiv\psi(a)\mod{n}}}\mu^2(b)=O(N^{3/4+\epsilon}),\text{ for all }\epsilon>0.
	\]
\end{lemma}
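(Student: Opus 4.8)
The plan is to evaluate the double sum directly, keeping careful track of its dependence on $\rho$. First I would fix a squarefree $a$ with $(a,n)=1$ and estimate the inner sum by Lemma~\ref{lem:bsum}, now with cutoff $a/\rho$ in place of $N/a$ and residue $a^\prime=\psi(a)$; exactly as in the proof of Lemma~\ref{lem:regionsi-iii} this gives
\[
	\sum_{\substack{b\le a/\rho\\(a,b)=1\\ b\equiv\psi(a)\mod{n}}}\mu^2(b)=\frac{\phi(a)}{a}\cdot\frac{a/\rho}{n\zeta(2)}\prod_{p\mid an}\frac{p^2}{p^2-1}+O\big(2^{\omega(a)}\sqrt{a/\rho}\big).
\]
Summing over $a\le\sqrt{\rho N}$ with $(a,n)=1$, the factor $a/\rho$ combines with $\phi(a)/a$ into $\phi(a)/\rho$, so the main term becomes
\[
	\frac{1}{\rho\,n\,\zeta(2)}\prod_{p\mid n}\frac{p^2}{p^2-1}\sum_{\substack{a\le\sqrt{\rho N}\\(a,n)=1}}\mu^2(a)\,\phi(a)\prod_{p\mid a}\frac{p^2}{p^2-1}.
\]

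The decisive point is the evaluation of the $a$-sum. Setting $G(x)=\sum_{a\le x,\,(a,n)=1}\mu^2(a)\phi(a)\prod_{p\mid a}\frac{p^2}{p^2-1}$, a Perron-type argument of the kind used in Lemma~\ref{lem:perron} yields $G(x)=c_n x^2+O(x^{1+\epsilon})$ for an explicit constant $c_n$. Evaluated at $x=\sqrt{\rho N}$ this equals $c_n\rho N+O(N^{1/2+\epsilon})$, and the prefactor $1/\rho$ cancels the $\rho$ inside. Hence the displayed double sum equals
\[
	\frac{c_n}{n\,\zeta(2)}\,N\prod_{p\mid n}\frac{p^2}{p^2-1}+O(N^{3/4+\epsilon}),
\]
the error being dominated by $\sum_{a\le\sqrt{\rho N}}\mu^2(a)2^{\omega(a)}\sqrt{a/\rho}=O(\rho^{1/4+\epsilon}N^{3/4+\epsilon})$. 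In particular the main term is of exact order $N$ and is \emph{completely independent of $\rho$}.

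This is the crux, and it forces a precise reading of the statement: since every summand is nonnegative there is no cancellation available within a single sum, so the displayed quantity is $\asymp N$ and cannot itself be $O(N^{3/4+\epsilon})$ for one fixed $\rho$. What \emph{is} $O(N^{3/4+\epsilon})$—and what the proof of Theorem~\ref{prop:TypeIIabcount} actually invokes—is the difference $\mc{S}_?^{(iii)}(N,R)-\mc{S}_?^{(iv)}(N,R)$, i.e.\ the instantiation at $\rho=1/R$ minus the instantiation at $\rho=R$. Precisely because the main term above does not depend on $\rho$, these two leading terms are identical and cancel, leaving
\[
	\mc{S}_?^{(iii)}(N,R)-\mc{S}_?^{(iv)}(N,R)=O(N^{3/4+\epsilon}).
\]
I would therefore record the lemma in this form; for Type~II one applies it to each of the two residues $\psi(a)\equiv\pm a\mod{9}$ and adds, which merely doubles the identical main terms and so leaves the cancellation intact.

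The main obstacle is thus not the error estimation—that is routine once Lemma~\ref{lem:bsum} is in hand—but isolating the clean reason for the cancellation, namely verifying that the constant produced by the $a$-sum is exactly $\rho$-independent, so that no spurious term of size $N\log R$ survives to corrupt the genuine main term $CN\log R$ supplied by Lemma~\ref{lem:regionsi-iii}. The only further care needed is uniformity: for the two fixed values $\rho=R$ and $\rho=1/R$ the factor $\rho^{1/4+\epsilon}$ in the error is harmless, so each instantiation carries an $O(N^{3/4+\epsilon})$ error and their difference does as well.
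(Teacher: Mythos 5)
Your proposal is correct, and it exposes a genuine error in the paper: Lemma~\ref{lem:regionsii-iv} as stated is \emph{false}, and your reformulation is what actually has to (and does) enter the proof of Theorem~\ref{prop:TypeIIabcount}. The paper's own proof applies Lemma~\ref{lem:bsum} with $x=a/\rho$ but records the main term of the inner sum as $\frac{\phi(a)}{\rho an}\cdot\frac{1}{\zeta(2)}\prod_{p\mid an}\frac{p^2}{p^2-1}$, i.e.\ with a spurious extra factor of $a$ in the denominator; the correct value is $\frac{\phi(a)}{\rho n}\cdot\frac{1}{\zeta(2)}\prod_{p\mid an}\frac{p^2}{p^2-1}$, exactly as you computed. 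That misplaced factor is what makes each term of the outer sum look bounded and the total look like $O(\sqrt{N})$. Done correctly, the outer sum is $\sum_{a\leq\sqrt{\rho N},\,(a,n)=1}A_2(a)$ in the notation of Lemma~\ref{lem:perron}, since $\mu^2(a)\phi(a)\prod_{p\mid a}\frac{p^2}{p^2-1}=\mu^2(a)\prod_{p\mid a}\frac{p^2}{p+1}$, and Perron with $k=2$ produces a main term of size $\asymp\rho N$, which the prefactor $1/\rho$ turns into $\frac{CN}{2n}\prod_{p\mid n}\frac{p^2}{(p-1)(p+2)}+O(N^{3/4+\epsilon})$. Your positivity remark settles that no sublinear bound can hold for a single fixed $\rho$: region (iv) simply contains $\asymp N$ lattice points, and the strongly carefree pairs have positive density among them.

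The repaired statement you propose is the right one: the main term above is independent of $\rho$ (and of $\psi$), so the two instantiations $\rho=1/R$ (region (iii)) and $\rho=R$ (region (iv)) cancel, giving $\mc{S}_?^{(iii)}(N,R)-\mc{S}_?^{(iv)}(N,R)=O(N^{3/4+\epsilon})$; and your uniformity check (the factor $\rho^{1/4+\epsilon}$ in the aggregated error is harmless for the two fixed values $\rho=R,1/R$) is the only delicate point. For Type~II, summing over the two residues $\pm a\mod{9}$ doubles identical $\rho$-independent main terms, so the cancellation persists, as you say. Consequently the downstream results---Theorems~\ref{prop:TypeIIabcount}, \ref{thm:fieldasymptotics}, and~\ref{thm:C}---are unaffected; what must change is the statement of this lemma (asserting negligibility of the \emph{difference}, or displaying the explicit $\rho$-independent main term) together with its proof, in the form you give.
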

\begin{proof}
	This proof proceeds along the same lines as the previous one. Lemma~\ref{lem:bsum} gives
	\[
		\sum_{\substack{b\leq a/\rho\\ (a,b)=1\\ b\equiv\psi(a)\mod{n}}}\mu^2(b)=\frac{\phi(a)}{\rho an}\cdot\frac{1}{\zeta(2)}\cdot\prod_{p\mid an}\frac{p^2}{p^2-1}+O(2^{\omega(a)}\sqrt{a/\rho}).
	\]
	%An old result of Mertens \cite{Mertens{ (see also \cite[\SII.29]{Handbook}) says that\footnote{In fact, Mertens obtained the two main terms and a power-saving error term.}
	%\[
	%	\sum_{m\leq x}d^\ast(m)=O(x\log(x)),
	%\]
	%where $d^\ast(m)$ denotes the number of \emph{unitary} divisors of $m$ (i.e.\ $d\mid m$ such that $(d,m/d)=1$). For squarefree $a$ 
	Again using that $d(m)=o(m^\epsilon)$, for all $\epsilon>0$, we see that\footnote{Here, we simply replace the $\sqrt{a}$ in the error term above with $N^{1/4}$.}
	\[
		\sum_{m\leq x}2^{\omega(m)}=O(x^{1+\epsilon}),\text{ for all }\epsilon>0.
	\]
	This gives, for all $\epsilon>0$, and up to $O(N^{3/4+\epsilon})$,
	\begin{align*}
		\sum_{\substack{a\leq\sqrt{\rho N}\\ (a,n)=1}}\mu^2(a)\!\!\!\!\!\sum_{\substack{b\leq a/\rho\\ (a,b)=1\\ b\equiv\psi(a)\mod{n}}}\!\!\!\!\!\mu^2(b)&=\frac{1}{\rho n\zeta(2)}\prod_{p\mid n}\frac{p^2}{p^2-1}\sum_{\substack{a\leq\sqrt{\rho N}\\ (a,n)=1}}\mu^2(a)\frac{\phi(a)}{a}\prod_{p\mid a}\frac{p^2}{p^2-1}\\
		&=\frac{1}{\rho n\zeta(2)}\prod_{p\mid n}\frac{p^2}{p^2-1}\sum_{\substack{a\leq\sqrt{\rho N}\\ (a,n)=1}}\mu^2(a)\prod_{p\mid a}\frac{p}{p+1}.
	\end{align*}
	%Applying Lemma~\ref{lem:perron} with $k=1$ now and $x=\sqrt{\rho N}$ yields the desired result.
	Since all the terms in the sum are less than 1, the sum itself is $O(\sqrt{N})$, and we are done.
\end{proof}

In order to obtain the counts above with congruence conditions, we need the following improvements to \cite[\S2]{Carefree}.
\begin{lemma}
	Fix $n\in\ZZ\geq1$. For $a$ and $a^\prime$ relatively prime to $n$, let
	\[
		T_{a,a^\prime,n}(x)=\#\{b\leq x:(a,b)=1, b\equiv a^\prime\mod{n}\}.
	\]
	Then
	\[
		T_{a,a^\prime,n}(x)=\frac{\phi(a)x}{an}+O(2^{\omega(a)}),
	\]
	where $\phi$ is the Euler totient function and $\omega(a)$ is the number of distinct prime divisors of $a$.
\end{lemma}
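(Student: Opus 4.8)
The plan is to detect the coprimality condition $(a,b)=1$ by Möbius inversion and thereby reduce to counting integers in a single arithmetic progression. Writing $\sum_{d\mid(a,b)}\mu(d)$ for the indicator function of $(a,b)=1$ and swapping the order of summation, I would obtain
\[
	T_{a,a^\prime,n}(x)=\sum_{d\mid a}\mu(d)\,\#\{b\leq x:d\mid b,\ b\equiv a^\prime\mod{n}\}.
\]
Only squarefree $d$ contribute, and every such $d$ divides $a$, hence is prime to $n$ since $(a,n)=1$.

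The key step is that, for each squarefree $d\mid a$, the two conditions $d\mid b$ and $b\equiv a^\prime\mod{n}$ can be merged. Because $(d,n)=1$, the Chinese Remainder Theorem shows they are equivalent to a single congruence modulo $dn$, so the number of $b\leq x$ satisfying them is $\frac{x}{dn}+O(1)$. Substituting this back in gives
\[
	T_{a,a^\prime,n}(x)=\frac{x}{n}\sum_{d\mid a}\frac{\mu(d)}{d}+O\!\left(\sum_{d\mid a}|\mu(d)|\right).
\]

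Finally I would evaluate the two resulting sums. The main term is $\frac{x}{n}\prod_{p\mid a}\left(1-\frac1p\right)=\frac{\phi(a)x}{an}$, using the standard Euler-product evaluation of $\sum_{d\mid a}\mu(d)/d$. The error term counts the squarefree divisors of $a$, so $\sum_{d\mid a}|\mu(d)|=2^{\omega(a)}$, yielding the claimed bound $O(2^{\omega(a)})$. There is no serious obstacle here: the one point that genuinely needs care is verifying that $(d,n)=1$ for every divisor $d$ of $a$ appearing in the Möbius sum, which is precisely where the hypothesis $(a,n)=1$ is used to license the CRT step and to keep the error from each of the $2^{\omega(a)}$ terms at $O(1)$; once that is in place, the remainder is a routine computation.
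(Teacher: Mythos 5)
Your proof is correct, and it follows the same skeleton as the paper's: detect $(a,b)=1$ via $\sum_{d\mid(a,b)}\mu(d)$, swap the order of summation, count $b\leq x$ in each resulting progression with error $O(1)$ per divisor, and finish with $\sum_{d\mid a}\mu(d)/d=\phi(a)/a$ and $\sum_{d\mid a}\mu^2(d)=2^{\omega(a)}$. The one place you diverge is in handling the congruence $b\equiv a^\prime\mod{n}$: you merge it with $d\mid b$ directly by the Chinese Remainder Theorem into a single residue class modulo $dn$, whereas the paper detects the congruence with the orthogonality relation $\frac{1}{\phi(n)}\sum_{\chi\text{ mod }n}\chi(a^\prime)\ol{\chi}(b)$, sums over residues $c$ modulo $n$, and lets the character sum collapse the sum back to $c=a^\prime$. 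The two devices do the same job; in fact the paper's final count of multiples of $d$ in a fixed class modulo $n$ as $\frac{x}{dn}+O(1)$ still tacitly relies on exactly the CRT fact you make explicit, so your version is the more elementary and arguably more transparent one, while the paper's character formalism buys nothing extra here beyond fitting the computation into a uniform analytic framework. You also correctly isolate where the hypothesis $(a,n)=1$ enters --- guaranteeing $(d,n)=1$ for all $d\mid a$ --- a point the paper leaves implicit.
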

Note that the right hand side is independent of $a^\prime$.
\begin{proof}
Using the identity
\[
	\sum_{d|m}\mu(d)=\begin{cases}
						1&\text{if }m=1\\
						0&\text{otherwise}
					\end{cases}
\]
and the orthogonality of Dirichlet characters in the form
\[
	\frac{1}{\phi(n)}\sum_{\chi\text{ mod }n}\chi(m)\ol{\chi}(m^\prime)=\begin{cases}
						1&\text{if }m\equiv m^\prime\mod{n}\\
						0&\text{otherwise}
					\end{cases}
\]
(where the sum is over all Dirichlet characters modulo $n$), we obtain
\begin{align*}
	T_{a,a^\prime,n}(x)&=\sum_{\substack{b\leq x\\(a,b)=1\\b\equiv a^\prime\mod{n}}}1\\
	&=\sum_{\substack{b\leq x\\b\equiv a^\prime\mod{n}}}\sum_{d\mid\gcd(a,b)}\mu(d)\\
	&=\sum_{b\leq x}\left(\sum_{d\mid\gcd(a,b)}\mu(d)\frac{1}{\phi(n)}\sum_{\chi\text{ mod }n}\chi(a^\prime)\ol{\chi}(b)\right)\\
	&=\frac{1}{\phi(n)}\sum_{d\mid a}\mu(d)\left(\sum_{c\text{ mod }n}\sum_{\chi\text{ mod }n}\chi(a^\prime)\ol{\chi}(c)\right)\\
	&\phantom{=}\cdot\#\{\text{multiples }b\text{ of }d:b\leq x, b\equiv c\mod{n}\}\\
	&=\sum_{d\mid a}\mu(d)\left(\frac{x}{dn}+O(1)\right)\\
	&=\frac{\phi(a)x}{an}+O(2^{\omega(a)}),
\end{align*}
where we've used that
\[
	\sum_{d\mid m}\mu(d)=\frac{\phi(m)}{m}.
\]
\end{proof}

\begin{lemma}\label{lem:bsum}
	Fix $n\in\ZZ\geq1$. For $a$ and $a^\prime$ relatively prime to $n$, let
	\[
		S_{a,a^\prime,n}(x)=\#\{b\leq x:(a,b)=1, b\equiv a^\prime\mod{n},b\text{ squarefree}\}.
	\]
	Then
	\[
		S_{a,a^\prime,n}(x)=\frac{\phi(a)x}{an}\cdot\frac{1}{\zeta(2)}\cdot\prod_{p\mid an}\frac{p^2}{p^2-1}+O(2^{\omega(a)}\sqrt{x}),
	\]
	where $\zeta(2)=\pi^2/6$.
\end{lemma}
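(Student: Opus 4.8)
The plan is to detect the squarefree condition on $b$ by the standard identity $\mu^2(b)=\sum_{d^2\mid b}\mu(d)$ and then to interchange the order of summation so as to reduce $S_{a,a^\prime,n}(x)$ to a $\mu$-weighted sum of the counts $T_{a,\cdot,n}$ handled in the preceding lemma. Writing each $b$ detected by $d^2\mid b$ as $b=d^2c$, the condition $b\leq x$ becomes $c\leq x/d^2$, the condition $(a,b)=1$ becomes $(a,d)=1$ together with $(a,c)=1$, and the congruence $b\equiv a^\prime\mod{n}$ becomes $d^2c\equiv a^\prime\mod{n}$.

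The key manipulation is to reorganize this last congruence. Since $(a^\prime,n)=1$, the congruence $d^2c\equiv a^\prime\mod{n}$ is solvable only when $(d,n)=1$, and in that case it is equivalent to $c\equiv a^{\prime\prime}_d\mod{n}$, where $a^{\prime\prime}_d:=a^\prime(d^2)^{-1}\bmod n$ is again a unit modulo $n$. Thus the terms with $(d,an)>1$ contribute nothing, and for the surviving $d$ the inner sum over $c$ is exactly $T_{a,a^{\prime\prime}_d,n}(x/d^2)$. Applying the preceding lemma, whose error term is independent of the residue class, gives
\[
	S_{a,a^\prime,n}(x)=\sum_{\substack{d\leq\sqrt{x}\\(d,an)=1}}\mu(d)\left(\frac{\phi(a)}{an}\frac{x}{d^2}+O(2^{\omega(a)})\right).
\]

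It then remains to evaluate the main term and control the errors. The main term is $\frac{\phi(a)x}{an}\sum_{d\leq\sqrt{x},\,(d,an)=1}\mu(d)/d^2$; completing this sum to all $d$ coprime to $an$ produces the Euler product
\[
	\sum_{(d,an)=1}\frac{\mu(d)}{d^2}=\prod_{p\nmid an}\left(1-\frac{1}{p^2}\right)=\frac{1}{\zeta(2)}\prod_{p\mid an}\frac{p^2}{p^2-1},
\]
which is precisely the claimed leading constant. The tail $\sum_{d>\sqrt{x}}\mu(d)/d^2=O(x^{-1/2})$ contributes $O((\phi(a)/an)\sqrt{x})=O(\sqrt{x})$, while the accumulated error from the preceding lemma is $\sum_{d\leq\sqrt{x}}O(2^{\omega(a)})=O(2^{\omega(a)}\sqrt{x})$. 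Since $O(\sqrt{x})$ is absorbed into $O(2^{\omega(a)}\sqrt{x})$, this yields the stated asymptotic. The only step requiring genuine care is the congruence bookkeeping after the substitution $b=d^2c$: one must check that solvability forces $(d,n)=1$ and that the resulting residue $a^{\prime\prime}_d$ is a unit, so that the preceding lemma applies verbatim and its residue-independent error is available; everything else is a routine hyperbola-method estimate.
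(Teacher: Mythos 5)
Your proposal is correct and follows essentially the same route as the paper: the paper also detects squarefreeness via M\"obius inversion over squares, rewrites the count as $\sum_{m\leq\sqrt{x},\,(m,an)=1}\mu(m)\,T_{a,a^\prime m^{-2},n}(x/m^2)$ (exactly your congruence bookkeeping, with the forced coprimality $(m,an)=1$ and shifted unit residue), applies the residue-independent estimate for $T$, and completes the resulting sum $\sum\mu(m)/m^2$ to the Euler product $\zeta^{(an)}(2)^{-1}$ with the tail and accumulated errors absorbed into $O(2^{\omega(a)}\sqrt{x})$. The only cosmetic difference is that you spell out the solvability argument and the tail estimate that the paper leaves implicit.
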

Once again, note that the right hand side is independent of $a^\prime$.
\begin{proof}
%here%%%%%%%%%%%%%%
By the inclusion-exclusion principle,
\begin{align*}
	S_{a,a^\prime,n}(x)&=\sum_{\substack{m\leq\sqrt{x}\\(m,an)=1}}\mu(m)T_{a,a^\prime m^{-2},n}(x/m^2)\\
	&=\frac{\phi(a)x}{an}\left(\sum_{\substack{m\leq\sqrt{x}\\(m,an)=1}}\frac{\mu(m)}{m^2}\right)+O(2^{\omega(a)}\sqrt{x})
	\end{align*}
\begin{align*}
	&=\frac{\phi(a)x}{an}\left(\sum^\infty_{\substack{m=1\\(m,an)=1}}\frac{\mu(m)}{m^2}\right)+O(2^{\omega(a)}\sqrt{x})\\
	&=\frac{\phi(a)x}{an}\cdot\frac{1}{\zeta^{(an)}(2)}+O(2^{\omega(a)}\sqrt{x})\\
	&=\frac{\phi(a)x}{an}\cdot\frac{1}{\zeta(2)}\cdot\prod_{p\mid an}\frac{p^2}{p^2-1}+O(2^{\omega(a)}\sqrt{x}).
\end{align*}
\end{proof}

Finally, our application of Perron's formula is dealt with in the following lemma. This approach appears in \cite[\S6.2]{Manjul-Ari}, though we include further details and a slightly more general result.
\begin{lemma}\label{lem:perron}
	Let $k$ be a nonnegative integer and let $n$ be a positive integer. Let
	\[
		A_k(a)=\mu^2(a)\prod_{p\mid a}\frac{p^k}{p+1}\quad\text{and}\quad	f_k(s)=\sum_{\substack{a\geq1\\(a,n)=1}}\frac{A_k(a)}{a^s}.
	\]
	Then,
	\[
		\sum_{\substack{a\leq x\\(a,n)=1}}A_k(a)=\zeta(2)C\prod_{p\mid n}\left(1+\frac{1}{p+1}\right)^{\!\!-1}\cdot\begin{cases}
																				\ds\log(x)+\gamma+3\kappa+\sum_{p\mid n}\frac{\log p}{p+2}&\text{if }k=0,\\
																				\ds\frac{x^k}{k}&\text{if }k>0,
																			\end{cases}
	\]
	where $C$ is as in Theorem~\ref{thm:C} and $\kappa$ is as in Lemma~\ref{lem:regionsi-iii}.
\end{lemma}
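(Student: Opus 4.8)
The plan is to recover the stated asymptotics from the analytic behaviour of $f_k(s)$ via Perron's formula, the crucial point being to locate its rightmost singularity. Since $A_k$ is multiplicative and supported on squarefree integers, $f_k$ has the Euler product
\[
	f_k(s)=\prod_{p\nmid n}\left(1+\frac{p^{k-s}}{p+1}\right),
\]
whose local factors behave like $1+p^{k-1-s}$; thus the product converges absolutely for $\Re(s)>k$ and should have its dominant singularity at $s=k$, matching $\zeta(s-k+1)$.

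First I would factor out this zeta factor by setting $g_k(s)=f_k(s)/\zeta(s-k+1)$. A short computation gives the local factor of $g_k$ as
\[
	1-\frac{p^{k-1-s}}{p+1}-\frac{p^{2k-1-2s}}{p+1},
\]
so that $g_k(s)$ converges absolutely, and is therefore holomorphic and bounded, for $\Re(s)>k-\tfrac12$. This continues $f_k$ to the left of $\Re(s)=k$, the only singularity there being the simple pole of $\zeta(s-k+1)$ at $s=k$. Evaluating the local factor at $s=k$ gives $\tfrac{(p-1)(p+2)}{p(p+1)}=\tfrac{(p-1)^2(p+2)}{p^3}\cdot\tfrac{p^2}{p^2-1}$; using the factorization $1-\tfrac{3}{p^2}+\tfrac{2}{p^3}=\tfrac{(p-1)^2(p+2)}{p^3}$ together with $\prod_p\tfrac{p^2}{p^2-1}=\zeta(2)$, the product over all primes rearranges to $\zeta(2)C$, and isolating the primes dividing $n$ (where $f_k$ has no factor) yields
\[
	g_k(k)=\zeta(2)\,C\prod_{p\mid n}\left(1+\frac{1}{p+1}\right)^{-1}.
\]

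Next I would apply a truncated Perron formula at an abscissa $c>k$ and shift the contour to $\Re(s)=k-\delta$ for a fixed $0<\delta<\tfrac12$, collecting the residue at $s=k$. For $k>0$ the factor $x^s/s$ is regular at $s=k$, so the simple pole contributes $g_k(k)\,x^k/k$, giving the stated term. For $k=0$ the poles of $\zeta(s+1)$ and of $x^s/s$ coalesce at $s=0$ into a double pole; inserting $\zeta(s+1)=s^{-1}+\gamma+O(s)$ and $x^s/s=s^{-1}+\log x+O(s)$ together with $g_0(s)=g_0(0)\big(1+(g_0'(0)/g_0(0))s+O(s^2)\big)$, the residue becomes
\[
	g_0(0)\left(\log x+\gamma+\frac{g_0'(0)}{g_0(0)}\right).
\]
The logarithmic derivative is computed factor by factor: each prime $p\nmid n$ contributes $\tfrac{3\log p}{(p-1)(p+2)}$ and each $p\mid n$ contributes $\tfrac{\log p}{p-1}$, and recombining these gives $g_0'(0)/g_0(0)=3\kappa+\sum_{p\mid n}\tfrac{\log p}{p+2}$, with $\kappa$ as in Lemma~\ref{lem:regionsi-iii}. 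This is precisely the bracketed constant in the statement.

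The main obstacle is the analytic bookkeeping rather than the algebra: one must justify the contour shift and bound the integral on the line $\Re(s)=k-\delta$, which requires the standard convexity bounds for $\zeta(s-k+1)$ in vertical strips together with the boundedness of $g_k$ there, and a balancing of the truncation height against the contour length. This produces a power-saving error of size $O(x^{k-\delta+\epsilon})$, negligible against $x^k$ when $k>0$ and against the $\log x$ main term when $k=0$, which is all that the applications in Lemmas~\ref{lem:regionsi-iii} and~\ref{lem:regionsii-iv} require. As the overall strategy follows \cite[\S6.2]{Manjul-Ari}, the genuinely new content lies in the two explicit evaluations above, and especially in the double-pole Laurent expansion producing the $3\kappa$ and the $n$-dependent correction.
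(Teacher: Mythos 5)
Your proposal is correct and follows essentially the same route as the paper: Perron's formula, factoring out the zeta function (your $g_k(s)$ equals the paper's $h(s-k)$, since the paper first reduces to $k=0$ via $f_k(s)=f_0(s-k)$), a contour shift picking up the simple pole for $k>0$ and the double pole for $k=0$, and the same Euler-product identities for the constant $\zeta(2)C\prod_{p\mid n}\bigl(1+\frac{1}{p+1}\bigr)^{-1}$ and the logarithmic derivative $3\kappa+\sum_{p\mid n}\frac{\log p}{p+2}$. The only differences are cosmetic (you evaluate the local factors at $s=k$ directly where the paper divides by $\zeta(s+2)$, and you are somewhat more explicit about truncation and the error bound on the shifted contour, which the paper glosses over).
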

\begin{proof}
	Note that $A_k(a)=a^kA_0(k)$ so that $f_k(s)=f_0(s-k)$. Since $A_0(a)\leq 1/a$, $f_0(s)$ converges absolutely for $\Re(s)>0$. Perron's formula (see e.g.\ \cite[Theorem~11.18]{Apostol}) then states that for any $\epsilon>0$ and any $k\geq0$
%	\begin{align*}
%		\sum_{a\leq x}\!{\vphantom{\sum}}^\ast A_k(a)&=\frac{1}{2\pi i}\int_{k+\epsilon-i\infty}^{k+\epsilon+i\infty}f_k(s)\frac{x^s}{s}ds\\
%		&=\frac{1}{2\pi i}\int_{k+\epsilon-i\infty}^{k+\epsilon+i\infty}f_0(s-k)\frac{x^s}{s}ds,
%	\end{align*}
	\[
		\sum_{a\leq x}\!{\vphantom{\sum}}^\ast A_k(a)=\frac{1}{2\pi i}\int_{k+\epsilon-i\infty}^{k+\epsilon+i\infty}f_k(s)\frac{x^s}{s}ds=\frac{1}{2\pi i}\int_{k+\epsilon-i\infty}^{k+\epsilon+i\infty}f_0(s-k)\frac{x^s}{s}ds,
	\]
	where the asterisk on the sum means that if $x$ is an integer, then the last term must be divided by 2. Following \cite[\S6.2]{Manjul-Ari}, we let $h(s)=f_0(s)/\zeta(s+1)$ and we compute
	\[
		\frac{1}{2\pi i}\int_{k+\epsilon-i\infty}^{k+\epsilon+i\infty}h(s-k)\zeta(s-k+1)\frac{x^s}{s}ds
	\]
	by shifting the integral just to the left of $\Re(s)=k$ picking up a pole of $\zeta(s)$ at $s=1$ and, when $k=0$, also the pole of $1/s$ at $s=0$. Since $h(s)$ converges for $\Re(s)>-1/2$, we may use Laurent series to determine the residues that occur. For $k>0$, near $s=k$
	\[
		h(s-k)\zeta(s-k+1)\frac{x^s}{s}=(h(0)+\cdots)\cdot(\frac{1}{s-k}+\cdots)\cdot(x^k+\cdots)\cdot(1/k+\cdots),
	\]
	so the residue is simply $h(0)x^k/k$. For $k=0$, near $s=0$
	\[
		h(s)\zeta(s+1)\frac{x^s}{s}=(h(0)+h^\prime(0)s+\cdots)\cdot(\frac{1}{s}+\gamma+\cdots)\cdot(1+s\log x+\cdots)\cdot1/s,
	\]
	so the residue is $h(0)\gamma+h(0)\log(x)+h^\prime(0)$.

	In order to evaluate $h(0)$, we consider $h(s)/\zeta(s+2)=f(s)/(\zeta(s+1)\zeta(s+2))$ at $s=0$, where its Euler product converges. Note that
	\[
		\left(1+\frac{1}{p+1}\right)(1-p^{-1})(1-p^{-2})=1-\frac{3}{p^2}+\frac{2}{p^3}
	\]
	so
	\[
		\frac{h(0)}{\zeta(2)}=C\prod_{p\mid n}\left(1+\frac{1}{p+1}\right)^{-1}.
	\]
	In order to evaluate $h^\prime(0)$, we instead evaluate $h^\prime(0)/h(0)$. One may verify that
	\[
		h(s)=\prod_{p\mid n}\left(1+\frac{1}{p+1}p^{-s}\right)^{-1}\prod_p\left(1-\frac{1}{p(p+1)}(p^{-s}+p^{-2s})\right).
	\]
	Taking the logarithm, then the derivative, and evaluating at $s=0$ yields
	\[
		\frac{h^\prime(0)}{h(0)}=3\kappa+\sum_{p\mid n}\frac{\log(p)}{p+2}.
	\]
\end{proof}

\subsection*{Acknowledgments}
The author would like to thank Piper Harron, Ari Shnidman, and Rufus Willett for some helpful conversations, as well as the referee for suggesting some points that should be clarified and asking some good questions.

\bibliographystyle{amsalpha}
\bibliography{cubic}

\end{document}